\newtheoremstyle{mystyle}
  {}
  {}
  {\itshape}
  {}
  {\bfseries}
  {.}
  { }
  {}
\theoremstyle{mystyle}
\newtheorem{theorem}{Theorem}[section]
\newtheorem{proposition}[theorem]{Proposition}
\newtheorem{corollary}[theorem]{Corollary}
\newtheorem{example}[theorem]{Example}
\newtheorem{remark}[theorem]{Remark} 
\title{\textbf{On continuation and convex Lyapunov functions}}
\author{Wouter Jongeneel and Roland Schwan
\thanks{\today. Both authors are supported by the Swiss National Science Foundation under the NCCR \textit{Automation}, grant agreement~51NF40\_180545.}
\thanks{Wouter Jongeneel is with the Risk Analytics and Optimization Chair (EPFL). Roland Schwan is with the Automatic Control Lab (EPFL) and the Risk Analytics and Optimization Chair (EPFL). Contact: \text{\{wouter.jongeneel, roland.schwan\}}\text{@epfl.ch}. The authors are grateful to Matthew D. Kvalheim and Matteo Tacchi for their feedback.}}
\begin{document}

\maketitle
\thispagestyle{plain}
\pagestyle{plain}

\begin{abstract}
Suppose that the origin is globally asymptotically stable under a set of continuous vector fields on Euclidean space and suppose that all those vector fields come equipped with---possibly different---convex Lyapunov functions. We show that this implies there is a homotopy between any two of those vector fields such that the origin remains globally asymptotically stable along the homotopy. Relaxing the assumption on the origin to any compact convex set or relaxing convexity to geodesic convexity does not alter the conclusion. Imposing the same convexity assumptions on control Lyapunov functions leads to a Hautus-like stabilizability test. These results ought to be of interest in the context of learning stability certificates, policy gradient methods and switched systems.
\end{abstract}

\section{Introduction}
\label{sec:introduction}
{
Ever since the time of Descartes, \textit{convexity} has been recognized as an important notion to study structural properties of various mathematical objects~\cite{ref:fenchel1983convexity}. In this work, we aim to improve our understanding of the topology of spaces of stable systems and show that again convexity plays an important role. In particular, we study if vector fields on $\mathbb{R}^n$ with a common global attractor can be \textit{continuously transformed} (formally, homotoped, see Section~\ref{sec:cvx}) into each other while preserving the attractor along the transformation. Before making this precise, we briefly elaborate on the relevance of this question.

One can argue that, originally, this question emerged in the dynamical systems community. That is, well over 40 years ago, Conley asked if dynamical systems with qualitatively similar properties can be continuously transformed into each other while preserving those properties along the transformation~\cite[p.~83]{ref:conley1978isolated}. In this work we address---in arguably the most simple setting---those \textit{continuation} (see Section~\ref{sec:continuation}) questions as posed by Conley~\cite{ref:conley1978isolated} and later Kvalheim~\cite{ref:kvalheim2022obstructions}. Our setting is simple in the sense that we largely focus on stability of equilibrium points instead of general attractors and spaces. 

Then, in the context of linear optimal control, policy gradient methods have been recently shown to be a powerful controller synthesis paradigm as data and constraints can be naturally incorporated~\cite{ref:hu2023toward}. Omitting details, these algorithms are frequently studied as being discretizations of a continuous-time gradient flow~\cite{ref:bu2019lqr}. Here, the common assumption is that the optimal control cost is only finite under a stabilizing controller. Now, the hope is that, if initialized properly, gradient flow gives rise to a curve of \textit{stabilized} closed-loop systems, moving from some initial closed-loop system to an optimal closed-loop system. As such, it is of great importance to understand \textit{a priori} when such a curve exists, especially when moving beyond linear systems. 
For instance, when such a curve does not exist, a step size cannot be made arbitrarily small~\textit{cf.}~\cite[Thm.~1]{ref:hu2023toward} as one might need to ``\textit{jump}'', similarly, initialization becomes of critical importance. Indeed, the importance of understanding the topology of the space of stable systems has been recognized early on,~\textit{e.g.}, see~\cite{Brockett1,ober1987topology} and motivated by these gradient-based methods this question received renewed interest,~\textit{e.g.}, see~\cite{ref:feng2019exponential,ref:bu2019lqr,ref:bu2020topological}.  

A more surprising motivating example can be found in the context of switched systems. It turns out that if we have two vector fields such that the origin is globally asymptotically stable (GAS) (see Section~\ref{sec:stability}), then, the origin remains GAS under arbitrary switching between those two vector fields only if those two vector fields can be continuously transformed into each other such that along the transformation the origin remains GAS (see Proposition~\ref{prop:nec:switched} below).   

Motivated by the above, this work aims to illustrate how the path-connectedness of spaces of dynamical systems can be studied via structural properties of Lyapunov functions. In particular, motivated by recent advances in learning~\cite{ref:amos2017input,ref:kolter2019learning}, we focus on the ramifications of assuming (control) Lyapunov functions---as pioneered by Artstein~\cite{ref:artstein1983stabilization} and Sontag~\cite{ref:sontag1989universal}---to be convex. Overall, this work is also in the spirit of the work by Arnold~\cite[Sec.~22]{arnold1973ordinary}, Zabczyk~\cite{ref:zabczyk1989}, Reineck~\cite{ref:reineck1991continuation},~Sepulchre \& Aeyels~\cite{ref:sepulchre1996homogeneous}, Gr\"une, Sontag \& Wirth~\cite{grune1999asymptotic}, Coron~\cite[Ch.~11]{ref:coron2007control}, Byrnes~\cite{ref:byrnes2008brockett} and Cieliebak \& Eliashberg~\cite[Ch.~9]{ref:cieliebak2012stein}.
}

We start by introducing Lyapunov functions for the dynamical control systems at hand. Then, in Section~\ref{sec:topo} we highlight topological properties of level sets of Lyapunov functions. These observations are the motivation for Section~\ref{sec:cvx}-\ref{sec:continuation} where we infer continuation results by considering several notions of convexity. This article is concluded in Section~\ref{sec:conclusion}. 

\noindent\textit{\textbf{Notation:}} Let $r\in \mathbb{N}\cup \{\infty\}$, then, $C^r(U;V)$ denotes the set of $C^r$-smooth functions from $U$ to $V$. The inner product on $\mathbb{R}^n$ is denoted by $\langle \cdot , \cdot \rangle$ and $\mathbb{S}^{n-1}=\{x\in \mathbb{R}^n:\|x\|_2=1\}$. The Lie derivative of a smooth function $h$ over some open set $U\subseteq \mathbb{R}^n$ with respect to a smooth vector field $X$ over $U$ is denoted by $L_X h$ and is defined pointwise by $L_Xh(p):=\langle \nabla h(p), X(p)\rangle$ for any $p\in U$~\cite[Prop.~12.32]{Lee2}. By $\mathrm{cl}(W)$ we denote the closure of $W$ and by $\mathrm{int}(W)$ we denote its interior. The map $x\mapsto x$ on $\mathbb{R}^n$ is denoted by $\mathrm{id}_{\mathbb{R}^n}$ and tangent spaces of appropriate sets $M$ are denoted by $T_pM$, for $p\in M$, with $TM$ denoting the corresponding tangent bundle~\cite[p.~65]{Lee2}.      

\subsection{Dynamical control systems}
We study \textit{\textbf{dynamical systems}} over $\mathbb{R}^n$ of the form
\begin{equation}
    \label{equ:dyn:sys}
    \frac{\mathrm{d}}{\mathrm{d}t}x(t)=F(x(t)) : \begin{cases}
        F:\mathbb{R}^n\to T\mathbb{R}^n\\
        \pi\circ F = \mathrm{id}_{\mathbb{R}^n},
    \end{cases}
\end{equation}
where $F$ is $C^r$-smooth with $r\geq 0$, $\pi:T\mathbb{R}^n\to \mathbb{R}^n$ defined by $(x,v)\mapsto \pi(x,v)=x$ is the canonical projection and for any $x\in \mathbb{R}^n$ we have with some abuse of notation $F(x)\in T_x\mathbb{R}^n$. Evidently, $T\mathbb{R}^n\simeq \mathbb{R}^n\times \mathbb{R}^n$, but~\eqref{equ:dyn:sys} is useful to keep in mind when comparing objects to assess if generalizations beyond $\mathbb{R}^n$ are possible. \textit{Integral curves} of~\eqref{equ:dyn:sys} are differentiable curves $t\mapsto \xi(t)\in \mathbb{R}^n$ such that $\dot{\xi}(t)=F(\xi(t))$ for all $t\in \mathrm{dom}(\xi)$, which is non-empty by, for instance, assuming that $r\geq 1$. However, in general, such an assumption is too strong. We will not go into further regularity conditions and always assume, for simplicity, that $r=0$ and that the vector field is \textit{complete},~\textit{i.e.}, a global flow (see below) is induced, such that we are allowed to make global statements\footnote{We remark that \textit{completeness} is the important property here as we will appeal to a global flow, \textit{smoothness} of $F$ (going beyond $C^0$), on the other hand, is rarely exploited. The only reason to potentially keep smoothness is that one can naturally relax completeness and make some local statements. Without completeness, global statements can break down, consider $\dot{x}=x^2$. However, as the emphasis of this article is on \textit{global asymptotic stability}, examples of that form are somewhat obsolete.}, for further information we point the reader to~\cite{ref:sontag2013mathematical,ref:hale2009ordinary}.

Going beyond \textit{descriptions}, when aiming to \textit{prescribe} the dynamics of a system we consider (time-invariant) \textit{\textbf{dynamical control systems}} over $\mathbb{R}^n\times \mathbb{R}^m$ of the form 
\begin{equation}
\label{equ:control:sys}
    \frac{\mathrm{d}}{\mathrm{d}t}x(t) = f(x(t),u)
\end{equation}
such that $f(x,u)\in T_x\mathbb{R}^n\simeq \mathbb{R}^n$ for all $(x,u)\in \mathbb{R}^n\times \mathbb{R}^m$, where $x$ and $u$ denote the state and input, respectively. 
Again, with some abuse of notation, we will assume that $f\in C^{0}(\mathbb{R}^n\times \mathbb{R}^m;\mathbb{R}^n)$, but again omit integrability discussions.
Input functions are of the form $t\mapsto \mu(t)\in \mathbb{R}^m$,~\textit{e.g.}, a state feedback is of the form $t\mapsto\mu(x(t))$.
Note, we use $\mu$ instead of $u$ to differentiate between the function and the point. A subclass of~\eqref{equ:control:sys} of interest are the so-called \textit{control affine} systems of the form
\begin{equation}
    \label{equ:control:sys:affine}
    \frac{\mathrm{d}}{\mathrm{d}t}x(t) = f(x(t)) + \textstyle\sum^m_{i=1}g_i(x(t))u_i,
\end{equation}
where $u_i$ is the $i^{\mathrm{th}}$ element of $u\in \mathbb{R}^m$ and again $f,g_i\in C^{0}(\mathbb{R}^n;\mathbb{R}^n)$ for $i=1,\dots,m$~\cite{nonlin}. Based on the control system at hand, one might say more about the space of allowable inputs $t\mapsto \mu(t)$,~\textit{e.g.}, one might consider \textit{absolutely integrable} ($L^1_{\mathrm{loc}}$) or \textit{essentially bounded} ($L^{\infty}_{\mathrm{loc}}$) function spaces~\cite[App.~C]{ref:sontag2013mathematical}.

\subsection{Stability}
\label{sec:stability}
Let $F$ parametrize a dynamical system of the form~\eqref{equ:dyn:sys}. By our standing completeness and smoothness assumptions, $F$ will give rise to a continuous \textit{flow}\footnote{Flows satisfy: (1) the \textit{identity} $\varphi^0=\mathrm{id}_{\mathbb{R}^n}$; and (2) \textit{group} property $\varphi^{s+t}=\varphi^s\circ\varphi^t$ $\forall s,t\in \mathbb{R}$.} $\varphi:\mathbb{R}\times \mathbb{R}^n\to \mathbb{R}^n$, with its evaluation denoted by $\varphi^t(x_0):=\varphi(t,x_0)$, which is understood to describe a solution to~\eqref{equ:dyn:sys} at time $t$, starting at time $0$ from $x_0$. A point $x^{\star}\in \mathbb{R}^n$ is an \textit{equilibrium point} of $F$ when $F(x^{\star})=0$, \textit{w.l.o.g.} we set $x^{\star}=0$. Then, $0$ is said to be \textit{\textbf{globally asymptotically stable}} (GAS) (with respect to $F$) if 
\begin{enumerate}[(s-i)]
    \item $0$ is \textit{Lyapunov stable}, that is, for any open neighbourhood $U_{\varepsilon}\ni 0$ there is an open set $U_{\delta}\subseteq U_{\varepsilon}$ such that a solution (with respect to $F$) starting in $U_{\delta}$ stays in $U_{\varepsilon}$;
    \item $0$ is \textit{globally attractive}, that is, $\lim_{t\to+\infty}\varphi^t(x_0)=0$ for all $x_0\in \mathbb{R}^n$.
\end{enumerate}
We will not further digress into solutions and stability and refer to~\cite{ref:sontag2013mathematical}. In general it is not straightforward to capture if $0$ is GAS or not. A fruitful tool that \textit{does} allow for conclusions of this form has been devised by Lyapunov in the late 1800s~\cite{ref:liapunov1892general}. A function $V\in C^{\infty}(\mathbb{R}^n;\mathbb{R}_{\geq 0})$ is said to be a (smooth, strict and proper) \textbf{\textit{Lyapunov function}} (with respect to $F$ and $0$) when
\begin{enumerate}[(V-i)]
    \item \label{prop:i:V}$V(x)>0$ for all $x\in \mathbb{R}^n\setminus\{0\}$ and $V(0)=0$;
    \item \label{prop:ii:V}$\langle \nabla V(x),F(x) \rangle <0$ for all $x\in \mathbb{R}^n\setminus\{0\}$;
    \item \label{prop:iii:V}and $V$ is \textit{radially unbounded}, that is, $V(x)\to +\infty$ for $\|x\|\to+\infty$.
\end{enumerate}
Property~(V-\ref{prop:iii:V}) implies sub-level set compactness. 
Now, based on work by Massera, Kurzweil and others~\cite{ref:kurzweil1963inversion,ref:fathi2019smoothing}, we will exploit the celebrated theorem stating that $0$ is GAS if and \textit{only if} there is a (corresponding) smooth Lyapunov function~\cite[Thm.~2.4]{ref:bacciotti2005liapunov}. Note, we dropped the adjective ``\textit{strict and proper}'' as we exclusively look at Lyapunov functions of that form. See also that, given that $V$ satisfies Property~(V-\ref{prop:i:V}), then $\langle \nabla V(x),F(x)\rangle \leq -V(x)$ implies Property~(V-\ref{prop:ii:V}).  

For further references on Lyapunov stability theory we point the reader to~\cite{ref:bhatia1970stability,ref:sontag2013mathematical,ref:bacciotti2005liapunov}. 

Now, given a control system~\eqref{equ:control:sys}, when it comes to the task of \textit{globally asymptotically stabilizing} $0$ (we will exclusively focus on stabilization by means of state feedback\footnote{Considering more general input functions,~\textit{e.g.}, of the form $t\mapsto \mu(t,x(t))$, integral curves of the corresponding closed-loop system are generally understood to be absolutely continuous curves $\xi:I\to \mathbb{R}^n$ such that the differential relation $\dot{\xi}(t)=F(\xi(t),\mu(t,\xi(t)))=:F'(t,\xi(t))$ holds for almost all $t\in I$, in the sense of Lebesgue. This requires rethinking some concepts,~\textit{e.g.}, global asymptotic stability and what a closed-loop \textit{vector field} really is.}), the Lyapunov function paradigm can be adjusted. Given our stabilization goal, we seek a function $t\mapsto \mu(x(t))$ such that under $f(x,\mu(x))=:F(x)$ the origin is GAS. Then, analogously to the definition of a Lyapunov function, one can define \textit{\textbf{control Lyapunov functions}} (CLFs), yet, Property~(V-\ref{prop:ii:V}) is now replaced by asking that for any $x\in \mathbb{R}^n\setminus\{0\}$ the following holds
\begin{equation}
\label{equ:CLF:cond}
    \inf_{u\in \mathbb{R}^m}\langle \nabla V(x), f(x,u) \rangle < 0. 
\end{equation}
It is not evident that a choice of input function based on~\eqref{equ:CLF:cond} can result in a continuous---let alone smooth---feedback. The next section elaborates on this problem.

\subsection{On control Lyapunov functions}
Consider a dynamical control affine system with scalar input of the form
\begin{equation}
\label{equ:basic:control:sys}
    \frac{\mathrm{d}}{\mathrm{d}t}x(t) = f(x(t))+g(x(t))u,
\end{equation}
Then, for $V$ to be a smooth CLF for~\eqref{equ:basic:control:sys}, we must have that for any $x\in \mathbb{R}^n\setminus\{0\}$ there exists a $u\in \mathbb{R}$ such that $L_f V(x) + u L_g V(x) < 0$. However, the existence of a \textit{smooth} \textit{control}-Lyapunov function is topologically strong in the sense that it generally implies (see below) that an asymptotically stabilizing \textit{continuous} feedback exists~\cite[Ch.~5]{ref:sontag2013mathematical}. Indeed, the controller attributed to Sontag is 
\begin{equation}
    \label{equ:Sontag}
    \mu_s(x) := \begin{cases}
  - \dfrac{\alpha(x)+\sqrt{\alpha(x)^2+\beta(x)^4}}{\beta(x)} \, & \text{if } \beta(x)\neq 0\\
  0 \, & \text{otherwise}, 
    \end{cases}
\end{equation}
for $\alpha(x):=L_f V(x)$ and $\beta(x):=L_gV(x)$,
\textit{e.g.}, see~\cite[p.~249]{ref:sontag2013mathematical}. Although~\eqref{equ:Sontag} appears singular, $\mu_s(x)$ can be shown to be continuous under the following condition; we speak of the \textit{{small control property}} when for all $\varepsilon>0$ there is a $\delta>0$ such that if $x\in \mathbb{R}^n\setminus \{0\}$ satisfies $\|x\|<\delta$, then, there is a $u$ such that $\|u\|<\varepsilon$ and $L_fV(x)+L_gV(x) u<0$~\cite[p.~247]{ref:sontag1989universal}. As such, the existence of a \textit{smooth} CLF is strictly stronger than being (globally) asymptotically controllable\footnote{See for example~\cite[Sec.~2]{ref:rifford2002semiconcave} and references therein for more on this notion.},~\textit{e.g.}, continuous feedback can be easily obstructed for globally controllable systems that even admit smooth CLFs\footnote{A well-known example attributed to Ledyaev \& Sontag is of the form $\dot{x}_1=u_2u_3$, $\dot{x}_2=u_1u_3$, $\dot{x}_3=u_1u_2$~\textit{cf.}~\cite{ref:ledyaev1999lyapunov}.}. Hence, the small control property does not always hold and it is well-known that CLF-based-controllers can be singular, and ever since their inception so-called ``\textit{desingularization techniques}'' emerged~\cite[Sec.~12.5.1]{ref:coron2007control}. For instance, under structural assumptions a backstepping approach to handle CLF singularities is studied in~\cite{ref:li1997maximizing} and a PDE reformulation to avoid singularities is presented in~\cite{ref:yamashita2000global}.

Nevertheless, in case the dynamical control system is affine in the input $u$, and $u$ is constrained to a compact convex set, then, the \textit{existence} of a $C^{\infty}$ CLF is equivalent to the existence of a $C^0$ (on $\mathbb{R}^n\setminus\{0\}$) stabilizing feedback~\cite{ref:artstein1983stabilization}. Indeed, the work by Sontag aimed at making the \textit{construction} of such a feedback transparent. Further relaxing regularity of a CLF, it can be shown that the existence of a so-called ``\textit{proximal CLF}'' is equivalent to asymptotic controllability. These proximal CLFs are $C^r$-smooth with $r\in [0,1)$,~\textit{e.g.}, see~\cite{ref:clarke2010discontinuous} for more on non-smooth CLFs. Better yet, it can be shown that global asymptotic controllability implies the existence of a---possibly discontinuous---feedback~\cite{clarke1997asymptotic}. Even more, Rifford showed that when the control system is globally asymptotically controllable, a---possibly nonsmooth---semiconcave\footnote{A continuous function $f$ is said to be \textit{semiconcave} when there is a $C>0$ such that $x\mapsto f(x)-C\|x\|_2^2$ is concave.} CLF always exists. Exploiting this structure, for control affine systems, Rifford could extend Sontag's formula~\eqref{equ:Sontag} to this setting~\cite[Thm.~2.7]{ref:rifford2002semiconcave} and \textit{get} again an explicit feedback.

The existence of a smooth CLF is not only topologically strong, it implies there exists a \textit{robustly} stabilizing feedback~\cite{ref:ledyaev1999lyapunov}.

\subsection{On learning-based stabilization}
\label{sec:stab:project}
Neural networks are becoming increasingly popular in the context of controller synthesis~\cite{ref:jin2020neural,ref:gaby2021lyapunov,ref:mukherjee2022neural,ref:zhang2022neural}. A principled approach, however, that guarantees some form of stability is largely lacking. Progress has been made when it comes to handling side-information~\cite{ref:ahmadi2020learn}, obtaining statistical stability guarantees \cite{ref:boffi2021learn}, in the context of input-state stability~\cite{ref:yang2022input}, in the context of input-output stability by exploiting the Hamilton-Jacobi inequality~\cite{ref:okamoto2022learning}, by exploiting contraction theory~\cite{ref:rezazadeh2022learning} and by exploiting Koopman operator theory~\cite{ref:zinage2022neural}, to name a few.  
As these methods are data-driven, errors inevitably slip in and great care must be taken when one aims to mimic CLF-based controllers,~\textit{i.e.}, if $L_gV(x)=0\implies L_fV(x)<0$ holds for the estimated system, does it hold for the real system and what happens if it does not? In particular, recall~\eqref{equ:Sontag}. Moreover, in this setting the underlying dynamical control system is frequently unknown and a function class for $V$ needs to be chosen \textit{a priori}, what does this choice imply? These questions inspired this work. 

We also point out that these methods continue a long history of research on computational methods for Lyapunov functions,~\textit{e.g.}, see~\cite{ref:giesl2015review} for a review.  

\section{Topological perspective on level sets and singularities}
\label{sec:topo}
We start by detailing (recalling) how level sets of smooth Lyapunov functions, with respect to points, look like topologically. This result has some ramifications and provides for motivation in the next section.  
For simplicity, we momentarily focus on~\eqref{equ:basic:control:sys}.

In Section~\ref{sec:stab:project} we discussed why one might be interested in studying terms of the form $L_gV(x)^{-1}=\langle \nabla V(x), g(x) \rangle^{-1}$~\textit{cf.}~\eqref{equ:Sontag}. In this section we show that for practical purposes, the properties of $V$ frequently obstruct this term to be well-behaved. Indeed, singularities are studied and shown to be unavoidable when $g(x):=g$ for some $g\in \mathbb{R}^n$. 

To start, consider a $C^{0}$ dynamical system of the form~\eqref{equ:dyn:sys} on $\mathbb{R}^n$, with $n\geq 2$, and assume that $0\in \mathbb{R}^n$ is globally asymptotically stable (and hence isolated). This implies that there is a (strict) $C^{\infty}$ Lyapunov function $V:\mathbb{R}^n\to \mathbb{R}_{\geq 0}$. In particular, this implies that $V$ is also a Lyapunov function for the $C^{\infty}$ auxiliary system
\begin{equation}
\label{equ:aux}
    \frac{\mathrm{d}}{\mathrm{d}t}z(t) = - \nabla V(z(t)).
\end{equation}
Hence, $0\in \mathbb{R}^n$ is also GAS under~\eqref{equ:aux}. By a classical topological result largely\footnote{Earlier comments can be found in~\cite{ref:bobylev1974deformation}, see also~\cite{ref:jongeneel2023topological}.} due to Krasnosel'ski\u{\i} \& Zabre\u{\i}ko~\cite[Sec.~52]{ref:krasnosel1984geometrical} this directly implies that the corresponding \textit{vector field index} (with respect to $0$) satisfies 
\begin{equation*}
    \mathrm{ind}_0(-\nabla  V)=(-1)^n\neq 0. 
\end{equation*}
As the vector field index is the (oriented) degree of the map $v:\partial U \to \mathbb{S}^{n-1}$ for any open neighbourhood $U$ of $0$ containing no other equilibrium points in its closure~\cite[Sec.~6]{ref:milnor65},~\cite[Ch.~3]{ref:guillemin2010differential}, this can only be true if 
\begin{equation*}
    v:\partial U\ni z \mapsto \frac{-\nabla  V(z)}{\| \nabla  V(z)\|_2}
\end{equation*}
is surjective. As $U$ is arbitrary, it follows that the (normalized) gradient of $V$ along any non-trivial level set hits any vector in $\mathbb{S}^{n-1}$. Differently put, fix any $g\in \mathbb{S}^{n-1}$ then, for any $c>0$ there is always a $z\in V^{-1}(c)=: V_c\subset  \mathbb{R}^n$ such that $\langle \nabla  V(z), g \rangle =0$. Indeed, this is why we assumed $n\geq 2$, otherwise the claim is not true~\textit{cf.}~\cite[p.~121]{ref:sontag1989universal}. Summarizing, we have shown the following---which is attributed to Wilson~\cite{ref:wilson1967structure} and Byrnes~\cite[Thm.~4.1]{ref:byrnes2008brockett}.  

\begin{proposition}[Level sets of smooth Lyapunov functions (Wilson, Byrnes)]
\label{prop:Lyapunov:levelset}
Let $n\geq 2$ and fix some $g\in \mathbb{R}^n\setminus \{0\}$. Then, for any level set $V_{c}$, with $c>0$, of any $C^{\infty}$-smooth Lyapunov function $V:\mathbb{R}^n\to \mathbb{R}_{\geq 0}$, asserting $0\in \mathbb{R}^n$ to be GAS under some dynamical system~\eqref{equ:dyn:sys}, there is an $x\in V_c$ such that $\langle \nabla V(x),g \rangle =0$.  
\end{proposition}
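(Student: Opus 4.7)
The plan is to transfer the topological content of global asymptotic stability from the original field $F$ to the \emph{gradient flow} of $V$, where the angular information about $\nabla V$ can be read off directly. First I would verify that $V$ is itself a strict Lyapunov function for the auxiliary system $\dot z = -\nabla V(z)$: indeed $\langle \nabla V(z), -\nabla V(z)\rangle = -\|\nabla V(z)\|_2^2$, and Property (V-\ref{prop:ii:V}) applied to the original $F$ forces $\nabla V(z)\neq 0$ for every $z\in \mathbb{R}^n\setminus\{0\}$ (otherwise the Lie derivative would vanish there). Together with radial unboundedness this gives that $0$ is GAS under the gradient flow and that, by the regular value theorem, each levelset $V_c$ with $c>0$ is a smooth compact $(n-1)$-dimensional hypersurface bounding the compact sublevel set $U_c := \{x\in \mathbb{R}^n : V(x)<c\}$, on which $\nabla V$ never vanishes.

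Next I would invoke the vector-field index. Since $0$ is GAS---and hence isolated---under $-\nabla V$, the classical result of Krasnosel'ski\u{\i} \& Zabre\u{\i}ko cited in the preamble yields $\mathrm{ind}_0(-\nabla V) = (-1)^n \neq 0$. Choosing the open neighbourhood to be precisely $U_c$, which contains no equilibria of $-\nabla V$ in its closure other than $0$, this index is realised as the (oriented) degree of the Gauss/normalizing map
\begin{equation*}
    v : V_c \longrightarrow \mathbb{S}^{n-1}, \qquad v(x) = \frac{-\nabla V(x)}{\|\nabla V(x)\|_2},
\end{equation*}
which is well-defined and continuous by the previous paragraph.

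Finally I would use the standard fact from degree theory that a continuous map of non-zero degree between closed orientable manifolds of the same dimension is surjective: if a point in $\mathbb{S}^{n-1}$ were missed, $v$ would factor through the contractible space $\mathbb{S}^{n-1}\setminus\{\mathrm{pt}\}$ and have degree $0$. Hence $v(V_c) = \mathbb{S}^{n-1}$. For $n\geq 2$, the equatorial subsphere $\{w\in \mathbb{S}^{n-1} : \langle w, g\rangle = 0\}$ is a non-empty $\mathbb{S}^{n-2}$; picking any $w$ in it and any $x\in v^{-1}(w)\subseteq V_c$ gives $\langle \nabla V(x), g\rangle = -\|\nabla V(x)\|_2\langle w, g\rangle = 0$, as desired.

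The main obstacle is largely bookkeeping: one must be careful that (i) $V_c$ is genuinely the boundary of an admissible isolating neighbourhood for the index computation, (ii) the normalising map is well-defined (which is the content of the $\nabla V\neq 0$ step above), and (iii) the dimension hypothesis $n\geq 2$ is used in exactly the right place, namely in the last step where the orthogonal complement of $g$ must meet $\mathbb{S}^{n-1}$ non-trivially; for $n=1$ the equator is empty and the conclusion fails, matching the remark in the text.
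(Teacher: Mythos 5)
Your proposal follows essentially the same route as the paper: pass to the auxiliary gradient system $\dot z = -\nabla V(z)$, invoke the Krasnosel'ski\u{\i}--Zabre\u{\i}ko index result to get $\mathrm{ind}_0(-\nabla V) = (-1)^n \neq 0$, identify this with the degree of the normalizing map on the levelset, and deduce surjectivity. You usefully make explicit two points the paper leaves tacit---that $\nabla V \neq 0$ on $\mathbb{R}^n\setminus\{0\}$ (so $V_c$ is a regular levelset and the normalizing map is defined) and the contractibility argument behind ``nonzero degree implies surjective''---but these are elaborations of the same argument, not a different one.
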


Note, Proposition~\ref{prop:Lyapunov:levelset} implicitly assumes that $0\in \mathbb{R}^n$ is the only equilibrium point as we assume the origin is \textit{globally} asymptotically stable. If desired, one can adapt the statement and work with the domain of attraction. Also note that the discussion above detailed that the normalized vector $\nabla V(x)$ will hit \textit{any} vector in $\mathbb{S}^{n-1}$, our focus on $\langle \nabla V(x),g \rangle $ being equal to $0$ at some point is purely application-driven. Moreover, we see that the set of points that render the inner product zero is of codimension $1$. 

Indeed, Proposition~\ref{prop:Lyapunov:levelset} is itself classical as this result can also be understood more intuitively by directly appealing to work by Wilson. Namely, due to the work by Wilson, and later Perelman, we know that the level sets of (strict and proper) $C^{\infty}$ Lyapunov functions $V:\mathbb{R}^n\to \mathbb{R}_{\geq 0}$ are homeomorphic to $\mathbb{S}^{n-1}$~\cite{ref:wilson1967structure,ref:stillwell2012poincare}. Although we might assume that these level sets $V_c$ and $\mathbb{S}^{n-1}$ come equipped with a smooth structure, this does not immediately imply the manifolds are diffeomorphic,~\textit{e.g.}, consider Milnor's \textit{exotic spheres}~\cite{ref:Milnor7}.  Nevertheless, one expects that the gradient of $V$ along $V_c$ hits any direction when seen as a vector in $\mathbb{S}^{n-1}$, as indeed succinctly shown above. Visualizations can be found in~\cite{ref:sontag1999stability} and further comments of this nature are collected by Byrnes in~\cite{ref:byrnes2008brockett}, in particular, the diffeomorphism question is addressed.   

The ramifications for smooth CLFs are immediate as one observes that the argument with respect to the auxiliary system~\eqref{equ:aux} extends \textit{mutatis mutandis}. 

This work is motivated by renewed interest in CLFs from the neural network community. The following example highlights some work that arguably would benefit from Proposition~\ref{prop:Lyapunov:levelset}.
\begin{example}[(Almost) Singular CLF-based controllers]
In~\cite[Sec.~IV]{ref:kashima2022learning} the authors consider a dynamical control system of the form $\dot{x}=f(x)+gu$ with $f\in C^{\infty}(\mathbb{R}^2;\mathbb{R}^2)$, $g\in \mathbb{R}^2$ and $u\in \mathbb{R}$. Their to-be-learned CLF is of the form $V(x)=\sigma_k(\gamma(x)-\gamma(0))+\varepsilon \|x\|^2$ with $\gamma:\mathbb{R}^n\to \mathbb{R}$ being an input-convex neural network and $\sigma_i:\mathbb{R}\to \mathbb{R}_{\geq 0}$ { $C^{1}$-smooth locally quadratic activation functions~\cite[Eq.~(13)]{ref:kolter2019learning}} for $i=0,\dots,k$. Hence, $V\in C^{1}(\mathbb{R}^n;\mathbb{R}_{\geq 0})$. Indeed, the authors report that the learned CLF leads to large control values (under a Sontag-type controller~\eqref{equ:Sontag}), they do not detail why. The above discussion provides a topological viewpoint.  
\end{example}

One can also interpret Proposition~\ref{prop:Lyapunov:levelset} through the lens of feedback linearization.
Consider some input-output system $\Sigma$ of the form
\begin{equation}
\label{equ:IO}
\Sigma: \left\{     \begin{aligned}
\frac{\mathrm{d}}{\mathrm{d}t}{x(t)} &= f(x(t))+g u\\
y(t) &= h(x(t))
    \end{aligned}\right.
\end{equation}
for $h=V$, that is, $h$ is given by the CLF $V$ (with respect to $f$ and $g$). Let the desired output be $y_d\equiv 0$ such that $e(t)=y(t)-y_d(t)=y(t)$. Hence, $\dot{e}=\dot{V}$. Now the standard (relative degree $1$) feedback linearizing controller for~\eqref{equ:IO} is of the form $u=(L_gV)^{-1}(v-L_fV)$ with $v$ denoting the new auxiliary input~\cite{ref:isidori1985nonlinear,nonlin}. Indeed, under the choice
\begin{equation*}
    v=-\sqrt{(L_fV)^2+(L_gV)^4}
\end{equation*}
one recovers Sontag's controller~\eqref{equ:Sontag}. Now Proposition~\ref{prop:Lyapunov:levelset} tells us that the \textit{decoupling} term $(L_gV)^{-1}$ must be singular in any sufficiently small neighbourhood of $0$,~\textit{i.e.}, the relative degree assumption fails to hold.  

\begin{remark}[Generalizations]
\label{rem:gen}
To go beyond input vector fields of the form $g(x)\equiv g\in \mathbb{R}^n$ we look at two scenarios. \\
(g-i)\,\,(Dependency on $x$): Introduce the function class 
\begin{align*}
    \mathscr{G}_n:=\{g\in C^{0}(\mathbb{R}^n;\mathbb{R}^n):g(x)=g_1+g_2(x)&,\\
    g_1\in \mathbb{R}^n\setminus\{0\},\,\lim_{x\to 0}g_2(x)=0&\}. 
\end{align*}
Indeed, for any $g\in \mathscr{G}_{n}$, with $n>1$, it follows that for sufficiently small $c>0$ there is an $x\in V_c$ such that $\langle \nabla V(x),g(x)\rangle=0$. The reason being that since $g\in \mathscr{G}_{n}$ there are always $x_1,x_2\in V_c$ such that $\langle \nabla V(x_1),g(x_1)\rangle<0$ while $\langle \nabla V(x_2),g(x_2)\rangle>0$. Then the claim follows from standing regularity assumptions and the intermediate value theorem. \\
    (g-ii)\,\,(Multidimensional input): Assume that $u\in \mathbb{R}^m$ with $1<m<n$ and let the dynamical control system be of the form $\dot{x}=f(x)+\sum^m_{i=1}g_i u_i$ (dependence on $x$ can be generalized as in (g-i)). Then, as $\mathrm{span}\{g_1,\dots,g_m\}\neq \mathbb{R}^n$ there is a nonzero $v\in \mathrm{span}\{g_1,\dots,g_m\}^{\perp}$.    
\end{remark}

Exploiting the remark from above, we recover a slightly weaker version of a well-known result~\textit{cf.}~\cite[Prop.~6.1.4]{ref:bloch}, better yet, one recovers (locally) a weaker version of the highly influential obstruction to continuous asymptotic stabilization of Brockett's nonholonomic integrator~\textit{e.g.}, see~\cite[Ex.~5.9.16]{ref:sontag2013mathematical}.

{
\begin{corollary}[Obstruction for nonholonomic systems]
Assume that $u\in \mathbb{R}^m$ with $1<m<n$ and let the dynamical control system be of the form $\dot{x}=\sum^m_{i=1}g_i(x) u_i$ with $g_i\in \mathscr{G}_n$ for $i=1,\dots,m$, then, there is no smooth CLF with respect to $0\in \mathbb{R}^n$.
\end{corollary}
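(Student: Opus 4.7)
The plan is a proof by contradiction. Suppose $V$ is a smooth CLF for the driftless system $\dot{x} = \sum_{i=1}^{m} g_i u_i$. Since the map $u \mapsto \langle \nabla V(x), \sum_i g_i u_i \rangle = \sum_{i=1}^{m} \langle \nabla V(x), g_i \rangle u_i$ is linear in $u \in \mathbb{R}^m$, the infimum in~\eqref{equ:CLF:cond} is either $-\infty$ (when at least one coefficient is nonzero) or $0$ (when all coefficients vanish). The CLF requirement therefore forces
\[
\nabla V(x) \notin W := \mathrm{span}\{g_1,\ldots,g_m\}^{\perp} \qquad \text{for every } x \in \mathbb{R}^n\setminus\{0\}.
\]

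The second step is to locate a point on some levelset where $\nabla V$ does lie in $W$, contradicting the above. Because $m < n$, one has $\dim W \geq n - m \geq 1$, so $W$ contains a nonzero vector $v$; set $\hat{v} := v/\|v\|_2 \in \mathbb{S}^{n-1}$. Now $V$ is in particular a smooth Lyapunov function for the auxiliary gradient system $\dot{z} = -\nabla V(z)$, which renders $0 \in \mathbb{R}^n$ globally asymptotically stable. By the vector-field-index argument preceding Proposition~\ref{prop:Lyapunov:levelset}, the normalized gradient map $z \mapsto -\nabla V(z)/\|\nabla V(z)\|_2$, defined on the smooth compact levelset $V_c$ for any $c > 0$, has degree $(-1)^n \neq 0$ and is consequently surjective onto $\mathbb{S}^{n-1}$. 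Hence there exists $x_0 \in V_c$ with $-\nabla V(x_0)/\|\nabla V(x_0)\|_2 = \hat{v}$, so $\nabla V(x_0)$ is a nonzero scalar multiple of $v$ and therefore lies in $W$, contradicting the first step.

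The one subtlety—and essentially the only step beyond bookkeeping—is that I need the \emph{full} surjectivity of the normalized gradient map rather than the conclusion stated in Proposition~\ref{prop:Lyapunov:levelset} per se; the latter only furnishes, for a single prescribed direction $g$, a point where $\langle \nabla V, g \rangle = 0$, which would not be enough to produce simultaneous orthogonality to $g_1,\ldots,g_m$. Invoking the surjective statement allows a single judiciously chosen $\hat{v} \in W \cap \mathbb{S}^{n-1}$ to do all the work at once. A minor bookkeeping check is that $\nabla V$ is nonzero on $V_c$ so that the normalization is well-defined, which follows from $V$ being a strict Lyapunov function whose only critical point is $0$.
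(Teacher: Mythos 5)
Your proof is correct and follows essentially the same route as the paper: reduce the CLF condition with $f\equiv 0$ to the requirement that $\nabla V$ never lies in $W=\mathrm{span}\{g_1,\dots,g_m\}^{\perp}$, then use the degree/surjectivity of the normalized gradient on a levelset (together with $\dim W\geq 1$) to locate a point where $\nabla V$ does lie in $W$. You also correctly flag a real imprecision in the paper's write-up: the proof there cites Proposition~\ref{prop:Lyapunov:levelset} as if it immediately yields a single $x'$ with $\nabla V(x')\perp\mathrm{span}\{g_1,\dots,g_m\}$, but the proposition as stated only gives, for one fixed direction, a point of orthogonality to that direction; to get $\nabla V(x')$ simultaneously orthogonal to all $g_i$ one must, as you do, choose a unit vector $\hat v\in W$ and invoke the full surjectivity of $z\mapsto -\nabla V(z)/\|\nabla V(z)\|_2$ established in the discussion preceding the proposition (equivalently, apply the degree argument directly to $\hat v$). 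So your argument is the intended one, made precise.
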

\begin{proof}
Indeed, this result follows from, for example, Brockett's condition~\cite{ref:brockett1983asymptotic}. However, from Proposition~\ref{prop:Lyapunov:levelset} and Remark~\ref{rem:gen} we know there is a point $x'\in \mathbb{R}^n\setminus \{0\}$ such that $\langle \nabla V(x'),\sum^m_{i=1}g_i(x')\rangle =0$. This implies that $L_fV(x')<0$ must hold for $V$ to be a CLF. As $f \equiv 0$, this is impossible and no smooth CLF can exist. 
\end{proof}
}
\section{On convexity}
\label{sec:cvx}
The previous section illustrated why level sets of Lyapunov functions are topological spheres. As such, this motivates the hope that all those Lyapunov functions can be transformed---in some sense---to the canonical Lyapunov function $V(x)=\tfrac{1}{2}\langle x,x\rangle$. {Indeed, Gr\"une, Sontag \& Wirth~\cite{grune1999asymptotic} showed that when $V$ is a $C^{\infty}$ Lyapunov function corresponding to $0$ being GAS, then, there is a $C^1$ homeomorphism $T$ such that $\widetilde{V}(T(y))=V(y)$ for $\widetilde{V}(y)=\tfrac{1}{2}\langle y,y \rangle$. However, it is not clear if their arguments can be extended to construct a homotopy from $-\nabla V(y)$ to $-y$ along vector fields such that $0$ remains GAS throughout. The complication here is the topology of the homeomorphism- and diffeomorphism groups used in their line of arguments. Those spaces are not necessarily path-connected, similar to $\{X\in \mathbb{R}^{n\times n}:\mathrm{det}(X)\neq 0\}$ not being path-connected, see~\cite[Ch.~9]{kupers2019lectures}.} 

To continue, we start this study of transformations by looking at \textit{convex} Lyapunov functions, as this class is particularly simple to handle. Better yet, by exploiting this structure, it follows that any convex Lyapunov function also asserts stability of the ``\textit{canonical}'' inward pointing vector field on $\mathbb{R}^n$ indeed, which we will denote with some abuse of notation by the map $-\mathrm{id}_{\mathbb{R}^n}$,~\textit{i.e.}, giving rise to $\dot{x}=-x$. Exactly this observation will be formalized and further studied below. 

\subsection{Convex Lyapunov functions}
Convexity in the context of Lyapunov stability theory has been an active research area.
For example, convexity in linear optimal control~\cite{AREbook}, convexity in the dual density formulation due to Rantzer~\cite{ref:prajna2004nonlinear}, convexity of the set of Lyapunov functions due to Moulay~\cite{ref:moulay2010some} {and recently, component-wise convexity of vector fields to construct Chetaev functions due to Sassano \& Astolfi~\cite{sassano2023role}}. We are, however, interested in understanding convexity of Lyapunov functions themselves. It is known that simple asymptotically stable dynamical systems do not always admit polynomial Lyapunov functions. For instance
\begin{equation}
\label{equ:ahmadi}
    \frac{\mathrm{d}}{\mathrm{d}t}\begin{pmatrix}
        x_1(t)\\ x_2(t)
    \end{pmatrix} = \begin{pmatrix}
        -x_1(t)+x_1(t)x_2(t)\\ -x_2(t)
    \end{pmatrix}
\end{equation}
does not admit a (global) polynomial Lyapunov function~\cite{ref:ahmadi2011globally}, but one can show that $V(x)=\log(1+x_1^2)+x_2^2$ is a Lyapunov function asserting $0\in \mathbb{R}^2$ is GAS. Indeed, $V$ is smooth, yet \textit{not} convex. We will come back to this several times below.
Similar obstructions can be found for analytic or rational Lyapunov functions~\cite{ref:bacciotti2005liapunov,ref:ahmadi2018globally}. 

The (computational) assumption to look for \textit{convex} Lyapunov functions is a popular one in the learning community,~\textit{e.g.}, propelled by~\cite{ref:amos2017input,ref:kolter2019learning}. However, this assumption evidently restricts the problem class that can be handled. The ramifications of assuming Lyapunov functions to be convex are understood in the context of linear systems, even for linear differential inclusions~\cite{ref:goebel2006conjugate} and linear switched systems~\cite{ref:mason2023universal}, {but not completely in the $C^0$ nonlinear setting. An exception is~\cite{ref:ahmadi2018sos}, where the authors consider nonlinear difference inclusions of the form $x_{k+1}\in \mathrm{conv}\{f_1(x_k),\dots,f_n(x_k)\}$ with $k\in \mathbb{N}$, $f_i\in C^0(\mathbb{R}^n;\mathbb{R}^n)$ $f_i(0)=0$ for $i=1,\dots, n$ and $\mathrm{conv}(\cdot)$ denoting the convex hull. Then, assuming that the maps $f_1,\dots,f_n$ share a common \textit{convex} Lyapunov function allows for concluding on $0$ being GAS\footnote{This should be understood in the discrete-time sense.}. Concurrently, they show that relaxing convexity is not possible in general, that is, counterexamples exist~\cite[Ex.~1]{ref:ahmadi2018sos}.}

Similarly, in our setting, for $n>1$, one can construct vector field examples $\dot{x}=F(x)$ over $\mathbb{R}^n$ such that $0$ is globally asymptotically stable, $F$ is smooth, yet no smooth convex Lyapunov function exists. To see why, for the sake of contradiction, one can exploit that by convexity we must have 
$
    \langle \nabla V(x), x\rangle \geq 0$ $\forall x\in \mathbb{R}^n
$
and due to the stability assumption we have
$
    \langle \nabla V(x),F(x)\rangle <0$ $ \forall x\in \mathbb{R}^n\setminus\{0\}
$
such that the function $V$ must satisfy $\langle \nabla V(x), F(x)-x \rangle < 0$ for all $x\in \mathbb{R}^n\setminus \{0\}$. Hence, if there is a non-zero fixed point\footnote{Note, here we heavily exploit the underlying vector space structure to be able to compare $x$ and $F(x)$.} of $F$, we contradict the existence of such a $V$. {See Figure~\ref{fig:ncvxLyap} for a phase portrait illustrating a dynamical system with a fixed point obstructing the existence of a \textit{smooth, convex} Lyapunov function.} As one will be able to infer from the results below, $F$ cannot point (radially) outward. Indeed, it is known that for \textit{homogeneous} Lyapunov functions this can also not be true~\cite[Prop.~1]{ref:sepulchre1996homogeneous}. {We also remark that for convex Lyapunov functions Property~(V-\ref{prop:iii:V}) is implied by Property~(V-\ref{prop:i:V})~\cite[Lem.~4.1]{ref:ahmadi2018sos}.}

\begin{figure}[t]
     \centering
         \includegraphics[scale=0.4]{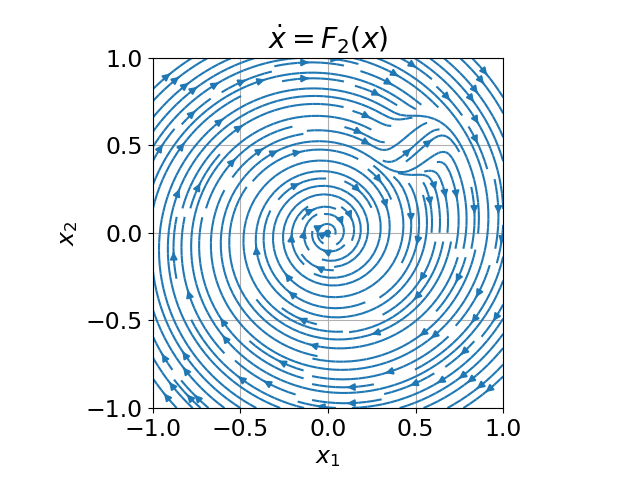}
        \caption{Example~\ref{ex:nec:ncvx}: integral curves of a smooth dynamical system that obstructs the existence of a smooth \textit{convex} Lyapunov function, yet, $0$ is GAS. Figure made with Python.}
        \label{fig:ncvxLyap}
\end{figure}

We will now formalize this observation. {To do so, we introduce the notion of a homotopy. The functions $f,g\in C^0(X;Y)$ are said to be \textit{homotopic} when there is a continuous map $H:[0,1]\times X\to Y$ such that for any $x\in X$ we have that $x \mapsto H(0,x)=f(x)$ while $x\mapsto H(1,x)=g(x)$. The homotopy is said to be a \textit{straight-line homotopy} when $H$ is simply of the form $H(s,x)=(1-s)f(x)+sg(x)$. Note that homotopies only become interesting beyond $X=Y=\mathbb{R}^n$,~\textit{e.g.}, on manifolds \textit{or} when requiring more structure to be preserved along the homotopy, as is done in this article. See~\cite{ref:jongeneel2023topological} for more on homotopies in the context of control theory. See also Section~\ref{sec:homotopy:path} for more on how homotopies allow us to discuss path-connectedness.} 

\begin{theorem}[Convex Lyapunov functions]
\label{thm:cvx:Lyap}
    Let $F\in C^{0}(\mathbb{R}^n;\mathbb{R}^n)$ give rise to $\dot{x}=F(x)$ with $0\in \mathbb{R}^n$ globally asymptotically stable (GAS) under $F$. Then, if there is a convex $C^{\infty}$ Lyapunov function asserting $0$ is GAS, the vector field $F$ is straight-line homotopic to $-\mathrm{id}_{\mathbb{R}^n}$ such that $0$ is GAS throughout the homotopy.
\end{theorem}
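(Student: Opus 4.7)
The plan is to use the very same convex Lyapunov function $V$ to certify global asymptotic stability along the obvious straight-line homotopy between $F$ and $-\mathrm{id}_{\mathbb{R}^n}$. Concretely, I would define $H:[0,1]\times\mathbb{R}^n\to\mathbb{R}^n$ by $H(s,x):=(1-s)F(x)-s\,x$, so that $H(0,\cdot)=F$ and $H(1,\cdot)=-\mathrm{id}_{\mathbb{R}^n}$, and $H$ is jointly continuous because $F$ is continuous and the $s$-dependence is affine. It then suffices to verify that properties (V-\ref{prop:i:V}), (V-\ref{prop:ii:V}) and (V-\ref{prop:iii:V}) hold for $V$ with respect to every vector field $H(s,\cdot)$: since $V$ itself is fixed along the homotopy, (V-\ref{prop:i:V}) and (V-\ref{prop:iii:V}) are automatic, and only the Lie-derivative condition (V-\ref{prop:ii:V}) carries content.

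The core of the argument is the observation that convexity of $V$, combined with $V(0)=0$, forces $\langle \nabla V(x),x\rangle \geq V(x) > 0$ for every $x\in\mathbb{R}^n\setminus\{0\}$; this is simply the subgradient inequality $V(0)\geq V(x)+\langle \nabla V(x),0-x\rangle$ rewritten, and is essentially the remark already made informally in the paragraph preceding the theorem. Combining it with the standing hypothesis $\langle \nabla V(x),F(x)\rangle<0$ on $\mathbb{R}^n\setminus\{0\}$, for every $s\in[0,1]$ and every non-zero $x$ one obtains
\begin{equation*}
    \langle \nabla V(x),H(s,x)\rangle \;=\; (1-s)\,\langle \nabla V(x),F(x)\rangle \;-\; s\,\langle \nabla V(x),x\rangle \;<\; 0,
\end{equation*}
where for $s\in[0,1)$ strict negativity is supplied by the first summand, and at $s=1$ by the second via $\langle \nabla V(x),x\rangle\geq V(x)>0$.

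Once (V-\ref{prop:ii:V}) is established uniformly in $s$, completeness of each $H(s,\cdot)$ is automatic: radial unboundedness of $V$ makes its sublevel sets compact, and the Lie-derivative sign makes them forward invariant, so trajectories exist for all positive time. Global asymptotic stability of $0$ under $H(s,\cdot)$ then follows from the standard Lyapunov theorem recalled in Section~1, and hence the homotopy $s\mapsto H(s,\cdot)$ passes through vector fields all of which render $0$ GAS.

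The proof has no real obstacle beyond this packaging; the only step that deserves attention is the endpoint $s=1$, where the $F$-contribution to the Lie derivative vanishes and strict negativity must be extracted from the inequality $\langle \nabla V(x),x\rangle\geq V(x)>0$ alone. That is precisely where genuine \emph{convexity} (rather than, say, star-shapedness of sublevel sets or mere positivity of $V$) is essential, and it is also what explains why the example~\eqref{equ:ahmadi} together with a non-convex smooth Lyapunov function does not automatically produce such a homotopy to $-\mathrm{id}_{\mathbb{R}^n}$.
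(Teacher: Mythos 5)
Your proof is correct and essentially identical to the paper's: both deduce $\langle\nabla V(x),-x\rangle\leq -V(x)<0$ from the subgradient inequality $V(0)\geq V(x)+\langle\nabla V(x),0-x\rangle$ and combine it with $\langle\nabla V(x),F(x)\rangle<0$ to certify strict Lyapunov decrease along the whole straight-line segment. Your parameterization $(1-s)F - s\,\mathrm{id}_{\mathbb{R}^n}$ is just a relabeling of the paper's $sF-(1-s)\,\mathrm{id}_{\mathbb{R}^n}$; the extra remarks on completeness via sublevel-set compactness and on strictness at the $s=1$ endpoint are welcome but do not change the route (the paper also offers an alternative Legendre--Fenchel-conjugate derivation of the same inequality, which you omit).
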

\begin{proof}
    {By assumption there is a $C^{\infty}$ Lyapunov function $V$ such that $\langle \nabla V(x),F(x) \rangle <0$ for all $x\in \mathbb{R}^n\setminus\{0\}$. By the convexity of $V$ we also know that
    \begin{equation}
        \label{equ:V:cvx}
        V(y)\geq V(x) + \langle \nabla V(x),y-x \rangle,\quad \forall y,x\in \mathbb{R}^n.
    \end{equation}
    In particular,~\eqref{equ:V:cvx} must hold for $y=0$, which yields $\langle \nabla V(x),-x\rangle \leq- V(x)$, that is, $V$ is also a Lyapunov function for $\dot{x}=-x$. 
    Hence, we find that $0$ is also GAS under $sF(x)-(1-s)x$ for all $s\in [0,1]$ since for any such $s$
    \begin{equation*}
        \langle \nabla V(x), sF(x)-(1-s)x\rangle <0 \quad \forall x\in \mathbb{R}^n\setminus \{0\}.
    \end{equation*}
Hence, $H(s,x)=sF(x)-(1-s)x$ is the homotopy.} 
\end{proof}
To illustrate the homotopy resulting from Theorem~\ref{thm:cvx:Lyap}, two vector fields $F_1$ and $F_2$ on $\mathbb{R}^n$ such that $0$ is GAS---asserted via possibly different smooth convex Lyapunov functions---are homotopic through a continuous map $H:[0,1]\times \mathbb{R}^n\to \mathbb{R}^n$ of the form
\begin{equation*}
    H(s,x) = \begin{cases}
-2sx +(1-2s)F_1(x) \quad & s\in [0,\tfrac{1}{2}]\\
-(2-2s)x+(2s-1)F_2(x) \quad & s\in (\tfrac{1}{2},1].
    \end{cases}
\end{equation*}
A variety of known topological conditions capture the existence of a (local) homotopy (in far more general settings), but not that \textit{along} the homotopy stability is preserved~\textit{cf.}~\cite{ref:kvalheim2022obstructions}. 

Similar statements can be made about \textit{control} Lyapunov functions.
\begin{corollary}[Convex control Lyapunov functions]
\label{cor:cvx:control:Lyap}
    Let $f\in C^{0}(\mathbb{R}^n\times \mathbb{R}^m;\mathbb{R}^n)$ give rise to the control system $\dot{x}=f(x,u)$. If there is a convex control Lyapunov function (CLF) $V\in C^{\infty}(\mathbb{R}^n;\mathbb{R}_{\geq 0})$ for this control system with respect to $0$, then, $V$ is a CLF for any control system on the straight-line homotopy between $f$ and the map $(x,u)\mapsto -x$. 
\end{corollary}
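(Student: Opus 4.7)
The plan is to mimic the proof of Theorem~\ref{thm:cvx:Lyap} \textit{mutatis mutandis}, using the linearity of $\langle \nabla V(x), \cdot \rangle$ in its second argument to interpolate between the CLF inequality for $f$ and the favourable pairing with $-x$ that convexity of $V$ automatically supplies.

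First I would reuse the argument from the proof of Theorem~\ref{thm:cvx:Lyap}: convexity and smoothness of $V$ together with $V(x)>0$ for all $x\in\mathbb{R}^n\setminus\{0\}$ yield the key pointwise bound
\begin{equation*}
    \langle \nabla V(x), -x \rangle < 0 \qquad \forall x\in \mathbb{R}^n\setminus\{0\},
\end{equation*}
either via the subgradient inequality $V(0)\geq V(x)+\langle \nabla V(x),-x\rangle$ combined with $V(0)=0$, or via the Legendre--Fenchel identity $V^*(\nabla V(x))+V(x)=\langle \nabla V(x),x\rangle$ with $V^*\geq 0$. In particular, the (trivial) control system $(x,u)\mapsto -x$ admits $V$ as a CLF with respect to $0$.

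Next, define the straight-line homotopy $f_s(x,u):= s\,f(x,u) + (1-s)(-x)$ for $s\in[0,1]$. I would fix $s\in[0,1]$ and $x\in \mathbb{R}^n\setminus\{0\}$, and, using that $V$ is a CLF for $f$, pick $u^\star(x)\in\mathbb{R}^m$ with $\langle \nabla V(x), f(x,u^\star(x))\rangle <0$. By bilinearity of the inner product,
\begin{equation*}
    \langle \nabla V(x), f_s(x,u^\star(x))\rangle = s\,\langle \nabla V(x), f(x,u^\star(x))\rangle + (1-s)\,\langle \nabla V(x), -x \rangle,
\end{equation*}
which is a non-negative combination of two strictly negative numbers for $s\in(0,1)$, hence strictly negative. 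The endpoints $s=0$ and $s=1$ follow immediately, from the auxiliary bound above and from the CLF hypothesis on $f$, respectively. Consequently, for every $s\in[0,1]$ and every $x\neq 0$ one has $\inf_{u\in\mathbb{R}^m}\langle \nabla V(x), f_s(x,u)\rangle<0$, so $V$ is a CLF for $f_s$, which is exactly the claim.

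No genuine obstacle arises; the corollary is essentially a direct translation of Theorem~\ref{thm:cvx:Lyap} from vector fields to control systems, with the convex-combination step replacing the straight-line homotopy at the level of inequalities. The only cosmetic care point is to note that the same $u^\star(x)$ works uniformly in $s$, so one does not need to track any measurability or continuity of the feedback along the homotopy; the statement concerns the \emph{existence} of admissible inputs rendering the Lie derivative negative, which is exactly the definition of a CLF adopted in~\eqref{equ:CLF:cond}.
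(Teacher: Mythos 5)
Your proof is correct and follows essentially the same route as the paper, which simply refers back to the proof of Theorem~\ref{thm:cvx:Lyap}: extract $\langle \nabla V(x), -x\rangle < 0$ from convexity of $V$, then combine it with the CLF inequality for $f$ by bilinearity along the straight-line homotopy. You spell out the details (fixing a witness $u^\star(x)$ that works uniformly in $s$, handling the endpoints) that the paper leaves implicit, but there is no substantive difference in approach.
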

\begin{proof}
    The proof is identical to that of Theorem~\ref{thm:cvx:Lyap}, yet, now we start from $V$ satisfying
    \begin{equation*}
    \forall x\in \mathbb{R}^n\setminus\{0\}\,\exists u\in \mathbb{R}^m\,:\,\langle \nabla V(x), f(x,u)\rangle < 0.
    \end{equation*}
    and again exploit convexity of $V$ to conclude. 
\end{proof}

As remarked above, we see from Theorem~\ref{thm:cvx:Lyap} that a necessary condition for $\dot{x}=F(x)$ to admit a smooth, convex Lyapunov function, asserting $0$ is GAS, is that
\begin{equation}
    \label{equ:cvx:L:nec}
   F(x)\neq \lambda x \quad \forall \lambda\in \mathbb{R}_{\geq 0},\, \forall x\in \mathbb{R}^n\setminus\{0\}. 
\end{equation}
Differently put, if $V$ is a Lyapunov function for $\dot{x}=F(x)$, it is also a Lyapunov function for $\dot{x}=F(x)-\lambda x$, with $\lambda\geq 0$. The next example shows we can find families of dynamical systems that do not obey Condition~\eqref{equ:cvx:L:nec}. 

\begin{example}[Necessarily nonconvex]
\label{ex:nec:ncvx}
The system as shown in Figure~\ref{fig:ncvxLyap} can be made explicit. Consider a $C^{\infty}$ dynamical system of the form~\eqref{equ:dyn:sys} on $\mathbb{R}^2$ as given by
\begin{equation}
\label{equ:nlin:ncvx}
\begin{aligned}
    \frac{\mathrm{d}}{\mathrm{d}t}\begin{pmatrix}
    x_1(t)\\ x_2(t)
\end{pmatrix} = \begin{pmatrix}
 \alpha & 1\\
    -1 & \alpha
\end{pmatrix}\begin{pmatrix} 
    x_1(t)\\ x_2(t)
\end{pmatrix}& \\
+  \gamma \left(\mathrm{exp}(-\beta \|x(t)-p\|_2^2 )-\mathrm{exp}(-\beta \|p\|_2^2 )\right)&\begin{pmatrix}
   1\\ 1
\end{pmatrix}
\end{aligned}
\end{equation}
where $x=(x_1,x_2)\in \mathbb{R}^2$, and $\alpha=-0.1$, $\beta=100$, $\gamma=10$ and $p=(0.5,0.5)\in \mathbb{R}^2$ correspond to Figure~\ref{fig:ncvxLyap}. Indeed, it can be shown that only $0\in \mathbb{R}^2$ is an equilibrium point of this dynamical system. Note,~\eqref{equ:nlin:ncvx} can also be understood as a stabilizable linear system under a (bounded) nonlinear perturbation. Regarding our necessary condition for convexity, we find, for instance, that $x=(0.51,0.45)\in \mathbb{R}^2$ and $\lambda=1/0.0629$ provide for a (numerical)\footnote{{We write ``\textit{numerical}'' since the obstruction is not provided in closed-form. However, our argument is constructive and as follows. Due to the symmetry in the nonlinear part, a necessary condition for~\eqref{equ:nlin:ncvx} to satisfy $F_2(x)=\lambda x$ is that $(\lambda-\alpha-1)x_1=(\lambda-\alpha+1)x_2$ holds for some appropriate tuple $(\lambda,\alpha,x_1,x_2)$. For instance, fix $(\alpha,\lambda,x_2)$ such that $(\lambda-\alpha-1)\neq 0$ and solve for $x_1$. Denote the resulting $(x_1,x_2)$ by $\bar{x}$. Next, let $\zeta=\mathrm{exp}(-\beta \|\bar{x}-p\|_2^2 )-\mathrm{exp}(-\beta \|p\|_2^2 )$, then, we can simply set $\gamma=(\lambda x_1-\alpha x_1 - x_2)/\zeta$ such that $F_2(\bar{x})=\lambda \bar{x}$ holds. The provided numerical values are obtained accordingly and hence provide for a valid obstruction.}} invalidation of~\eqref{equ:cvx:L:nec} indeed. {Regarding stability, we remark that $\gamma$ can be understood as a bifurcation parameter, with $0$ being GAS for our choice of $\gamma$}\footnote{{To be more precise, $0$ is GAS for $\gamma \in [0,11.2782)$, while for $\gamma = 11.2782$, $0$ is just locally asymptotically stable, enclosed by a semi-stable periodic orbit. Then for $\gamma>11.2782$, the semi-stable orbit bifurcates into an unstable- and a stable orbit. The analysis is performed using MatCont 7.4~\cite{ref:dhooge2008new}. A simulation can be found at \url{https://wjongeneel.nl/bifurcationF2.gif}. This type of bifurcation appears in neuroscience~\cite[Fig. 1.7]{ref:izhikevich2001synchronization} and goes by several names, \textit{e.g.}, a saddle-node bifurcation of a limit cycle or a fold bifurcation of a cycle, see also~\cite[Sec. 8.4]{ref:strogatz}.}}.
\end{example}

\begin{remark}[A nonconvex conic structure]
    Let $0\in \mathbb{R}^n$ be GAS under $\dot{x}=F(x)$, then $0$ is also GAS under $\dot{x}=\theta F(x)$ for any $\theta>0$,~\textit{e.g.}, consider $\langle \nabla V(x), F(x)\rangle $ and $\langle \nabla V(x), \theta F(x)\rangle$ for some Lyapunov function $V$ with respect to $F$. Hence, {if $F$ is convex}, then by Theorem~\ref{thm:cvx:Lyap}, all $\theta F$ are straight-line homotopic to $-\mathrm{id}_{\mathbb{R}^n}$. {Despite the conic structure, convexity of the set of these vector fields (vector fields such that $0$ is GAS)} breaks down as already the set of Hurwitz stable matrices is nonconvex,~\textit{e.g.}, 
    \begin{equation*}
        s\begin{pmatrix}
            -1 & 10\\ 0 & -1
        \end{pmatrix} + (1-s)\begin{pmatrix}
            -1 & 0\\ 10 & -1
        \end{pmatrix}
    \end{equation*}
    becomes unstable (not all eigenvalues lie in $\mathbb{C}_{\Re<0}$) for $s=\tfrac{1}{2}$. 
\end{remark}

Similarly, from Corollary~\ref{cor:cvx:control:Lyap} we see that the control system $\dot{x}=f(x,u)$ admits a smooth, convex CLF only when
\begin{equation}
    \label{equ:cvx:CLF:nec}
    \forall x\in \mathbb{R}^n\setminus \{0\}\,\exists u \in \mathbb{R}^m\,:\,  f(x,u)\neq \lambda x \quad \forall \lambda\in \mathbb{R}_{\geq 0}.  
\end{equation}
Indeed, one can replace $\lambda\in \mathbb{R}_{\geq 0}$ in~\eqref{equ:cvx:CLF:nec} by, for example, $\lambda\in C^{0}(\mathbb{R}^n;\mathbb{R}_{\geq 0})$~\textit{cf.}~\cite{ref:sepulchre1996homogeneous}. 

\begin{example}[Linear dynamical systems]
\label{ex:lin:sys}
    Consider the linear dynamical system $\dot{x}=Ax$ for some matrix $A\in \mathbb{R}^{n\times n}$. Theorem~\ref{thm:cvx:Lyap} implies that for a convex Lyapunov function to exist (asserting $0$ is GAS) the expression $sAx-(1-s)x$ cannot vanish for some $s\in[0,1]$ and $x\in \mathbb{R}^n\setminus 0$. Reformulating, we get~\eqref{equ:cvx:L:nec},~\textit{i.e.}, $Ax=\lambda x$ cannot have a solution for some $\lambda\geq 0$ and $x\in \mathbb{R}^n\setminus \{0\}$. However, this is precisely stating that $A$ cannot have an unstable eigenvalue of the form $\lambda\in \mathbb{R}_{\geq 0}$. Indeed, for globally asymptotically stable linear systems a convex (quadratic) Lyapunov function of the form $V(x)=\tfrac{1}{2}\langle Px, x \rangle$ always exists~\cite[Thm.~18]{ref:sontag2013mathematical}.  
\end{example}

\begin{remark}[On sufficiency]
\label{rem:suff}
 Example~\ref{ex:nec:ncvx} showed that for dynamical systems of the form~\eqref{equ:nlin:ncvx} no convex Lyapunov function can exist. Going back to~\eqref{equ:ahmadi}, the provided Lyapunov function is nonconvex. Concurrently, one can check that~\eqref{equ:cvx:L:nec} holds, so a convex Lyapunov function is not ruled out. We come back to this below.   
\end{remark}

To elaborate on Example~\ref{ex:lin:sys}, for controllable linear systems,~\textit{e.g.}, of the form $\dot{x}=Ax+Bu$, one can always parameterize a quadratic Lyapunov function for the LQ optimally controlled closed-loop system by the positive definite solution to the corresponding Riccati equation (for any appropriate cost)~\cite[Thm.~42, Ex.~8.5.4]{ref:sontag2013mathematical}.

\begin{example}[Linear dynamical control systems and Hautus' test]
\label{ex:lin:control:sys}
A celebrated condition largely attributed to Hautus (plus Belevitch and Popov) states that a linear dynamical control system of the form $\dot{x}=Ax+Bu$, is stabilizable when 
\begin{equation}
    \label{equ:Hautus}
    \mathrm{rank}\left( \begin{pmatrix}A-\lambda I_n & B \end{pmatrix} \right)=n \quad \forall \lambda \in \sigma(A)\cap \mathbb{C}_{\Re\geq 0}, 
\end{equation}
where $\sigma(A)$ denotes the spectrum of $A$. See for instance~\cite[Ch.~3]{ref:trentelman2012control}.
Now, elementary algebraic arguments show that Hautus' condition~\eqref{equ:Hautus} implies that~\eqref{equ:cvx:CLF:nec} holds, as it should for linear control systems. 
\end{example}

Using the above, one can readily verify that, for example 
\begin{equation}
\label{ex:sys:1}
\frac{\mathrm{d}}{\mathrm{d}t}
    \begin{pmatrix}
        {x_1}(t)\\ {x_2}(t)
    \end{pmatrix} = \begin{pmatrix}
        x_1(t) u\\ x_1(t)x_2(t) u
    \end{pmatrix}
\end{equation}
does not admit a smooth, convex CLF. Indeed, for~\eqref{ex:sys:1}, controllability is lost at $(0,x_2)\in \mathbb{R}^2$. 

Example~\ref{ex:lin:sys} and Example~\ref{ex:lin:control:sys} show that conditions~\eqref{equ:cvx:L:nec} and~\eqref{equ:cvx:CLF:nec} are to some extent generalizations of known conditions for linear systems, yet, lifted to nonlinear systems under convexity assumptions. These conditions are, however, weak. 

A stronger set of conditions one can derive from Theorem~\ref{thm:cvx:Lyap} is of the form: $\dot{x}=f(x,u)$ admits a smooth, convex CLF only if $\dot{x}=f(x,u)-\lambda(x)x$ does, for any $\lambda\in C^{0}(\mathbb{R}^n;\mathbb{R}_{\geq 0})$.
We are not the first to observe something of this form, \textit{e.g.},~Sepulchre \& Aeyels~ \cite[Sec.~4.1]{ref:sepulchre1996homogeneous} look at homogeneous CLFs and recover a similar condition. 

{
 We close this subsection with a comment on mere \textit{Lyapunov stability}, {that is, Property~(V-\ref{prop:ii:V}) is replaced with the weaker notion $\langle \nabla V(x),F(x) \rangle \leq 0$. Although this notion of stability is understood as local, under sub-level set compactness, such a Lyapunov function is of use as it allows for concluding trajectories to remain bounded.}  
\begin{remark}[Lyapunov stability]
 At the time of writing, several examples of Lyapunov stable dynamical systems surfaced that provably fail to admit a smooth, convex Lyapunov function~\cite{ref:akbarian2023introducing}. We show that our line of arguments offers an arguably simpler means of reaching such a conclusion. Following the same reasoning as for Theorem~\ref{thm:cvx:Lyap}, when $0$ is Lyapunov stable under some vector field $F$ and comes equipped with a $C^{\infty}$ convex Lyapunov function $V$, then, we must have that $\langle \nabla V(x), F(x) - \lambda x\rangle \leq 0$ $\forall x\in \mathbb{R}^n,\, \lambda \in \mathbb{R}_{\geq 0}$. We claim that the existence of a point $x'\in \mathbb{R}^n\setminus\{0\}$ such that $F(x')_i = \lambda' x_i' \neq 0$ for $i=1,\dots,n$ for some $\lambda'\in \mathbb{R}_{>0}$ contradicts the existence of such a function $V$. To see this, suppose that such a pair $(x',\lambda')$ exists, then we can find $\lambda_1,\lambda_2 \in \mathbb{R}_{>0}$ such that $2F(x')=(\lambda_1+\lambda_2)x'$. In particular, we have that $F(x')-\lambda_1 x' = (-1)(F(x')-\lambda_2 x')$. We can select $\lambda_1\neq \lambda_2$ such that by construction no element of the equation above equals $0$. However, that means that when we move $\lambda$ from $\lambda_1$ to $\lambda_2$, the sign of $\langle \nabla V(x'),F(x')-\lambda x'\rangle$ flips, which is a contradiction. One can employ precisely this argument to show that for $k>1$ the origin of the system
\begin{align*}
    \frac{\mathrm{d}}{\mathrm{d}t}\begin{pmatrix}
        r(t)\\ \theta(t)
    \end{pmatrix} = \begin{pmatrix}
        r^{2k+1}\sin(1/r)\\
        1
    \end{pmatrix}
\end{align*}
is Lyapunov stable, yet, no smooth, convex Lyapunov function exists to assert this~\textit{cf.}~\cite[Thm.~1]{ref:akbarian2023introducing}.
\end{remark}
}

\subsection{On compact convex sets}
We briefly show that without too much effort the results extend from $0\in \mathbb{R}^n$ being GAS under some dynamical system parametrized by $F\in C^{0}(\mathbb{R}^n;\mathbb{R}^n)$ to a compact convex set $A\subseteq \mathbb{R}^n$ being GAS\footnote{For more on the generalization of stability notions from points to sets we point the reader to~\cite{ref:hurley1982attractors} for a topological treatment.} under $F$. As $A$ is homotopy equivalent to a point, this is perhaps not surprising. Define the projection operator by
\begin{equation*}
    \Pi_A(x):=\arg\min_{y\in A}\|x-y\|_2. 
\end{equation*}
We have the following.
\begin{corollary}[Convex Lyapunov functions for convex compact sets]
\label{cor:cvx:Lyap:A}
    Let $F\in C^{0}(\mathbb{R}^n;\mathbb{R}^n)$ give rise to $\dot{x}=F(x)$ with a compact convex set $A\subseteq \mathbb{R}^n$ being globally asymptotically stable (GAS) under $F$. Then, if there is a convex $C^{\infty}$ Lyapunov function asserting $A$ is GAS, the vector field $F$ is straight-line homotopic to $\Pi_A-\mathrm{id}_{\mathbb{R}^n}$ on $\mathbb{R}^n\setminus A$ such that $A$ is GAS throughout the homotopy.
\end{corollary}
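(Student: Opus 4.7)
The plan is to mimic the proof of Theorem~\ref{thm:cvx:Lyap} verbatim, replacing the role of the origin by an arbitrary point $\Pi_A(x)\in A$, which is well-defined and continuous (indeed $1$-Lipschitz) since $A$ is closed and convex. The key observation is that a smooth strict Lyapunov function $V$ asserting that the compact convex set $A$ is GAS must satisfy $V(y)=0$ for every $y\in A$ and $V(x)>0$ for every $x\in\mathbb{R}^n\setminus A$, together with $\langle\nabla V(x),F(x)\rangle<0$ for $x\notin A$ and the usual radial unboundedness. Hence $\nabla V$ vanishes on $A$ (all points of $A$ are global minimizers of the smooth function $V$) and the subgradient inequality~\eqref{equ:V:cvx} remains valid.

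First I would set $y=\Pi_A(x)\in A$ in~\eqref{equ:V:cvx} and use $V(\Pi_A(x))=0$ to obtain
\begin{equation*}
0=V(\Pi_A(x))\geq V(x)+\langle\nabla V(x),\Pi_A(x)-x\rangle,\qquad\forall x\in\mathbb{R}^n\setminus A,
\end{equation*}
which gives $\langle\nabla V(x),\Pi_A(x)-x\rangle\leq -V(x)<0$ on $\mathbb{R}^n\setminus A$. Together with the observation that $\Pi_A(x)-x=0$ on $A$ (so that $A$ is trivially positively invariant under $\Pi_A-\mathrm{id}_{\mathbb{R}^n}$), this shows that $V$ is a smooth convex Lyapunov function certifying that $A$ is GAS under the auxiliary vector field $\Pi_A-\mathrm{id}_{\mathbb{R}^n}$.

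Next I would define the straight-line homotopy $H(s,x):=sF(x)+(1-s)(\Pi_A(x)-x)$ for $s\in[0,1]$ and check by linearity of the Lie derivative that, for every $x\in\mathbb{R}^n\setminus A$,
\begin{equation*}
\langle\nabla V(x),H(s,x)\rangle = s\langle\nabla V(x),F(x)\rangle+(1-s)\langle\nabla V(x),\Pi_A(x)-x\rangle<0,
\end{equation*}
since each summand is nonpositive and at least one is strictly negative. Thus $V$ remains a strict Lyapunov function for $\dot x=H(s,x)$ along the entire homotopy, so $A$ is GAS throughout; concatenating two such homotopies (as after Theorem~\ref{thm:cvx:Lyap}) then yields the full statement for two possibly different convex $V_1,V_2$.

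The only delicate point, which I would address explicitly, is the behavior \emph{on} $A$: there $H(s,x)=sF(x)$, so positive invariance follows from the positive invariance of $A$ under $F$ (a standard consequence of $A$ being GAS under $F$) combined with the time-rescaling invariance of trajectories for $s\in(0,1]$, and from the fact that $\Pi_A-\mathrm{id}_{\mathbb{R}^n}$ itself vanishes on $A$ at $s=0$. Apart from that, there is no genuine obstacle: all analytic ingredients (continuity of $\Pi_A$, vanishing of $V$ on $A$, the convexity inequality, linearity in $s$) are already available, and the argument is a direct, set-valued analogue of the proof of Theorem~\ref{thm:cvx:Lyap}.
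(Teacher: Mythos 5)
Your proposal is correct and takes essentially the same route as the paper: substitute $y=\Pi_A(x)$ into the first-order convexity inequality, use $V(\Pi_A(x))=0$ to obtain $\langle\nabla V(x),\Pi_A(x)-x\rangle<0$ on $\mathbb{R}^n\setminus A$, and then run the linear-in-$s$ Lie-derivative argument from Theorem~\ref{thm:cvx:Lyap}. You supply additional (correct) detail on behavior on $A$ itself, but since the corollary only claims the homotopy on $\mathbb{R}^n\setminus A$ --- and the paper explicitly remarks that nothing is asserted about the homotopy on $A$ because only invariance of $A$ under $F$ is assumed --- that extra step is not required.
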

\begin{proof}
    The Lyapunov function is such that $V(x)=0\iff x\in A$, hence for the convexity condition $V(y)\geq V(x)+\langle \nabla V(x),y-x \rangle$ we pick $y=\Pi_A(x)$ such that for all $x\in \mathbb{R}^n\setminus A$ we have $\langle \nabla V(x),\Pi_A(x)-x\rangle <0$. We can conclude. 
\end{proof}
Some comments are in place, we do not need $F(A)=0$, $A$ merely needs to be invariant\footnote{Let $\varphi$ be the flow corresponding to $F$, then $A$ is said to be invariant (under $\varphi$) when $\varphi(\mathbb{R},A)=A$.}. This is why we cannot say anything about the homotopy on $A$ itself. Moreover, settings like these easily obstruct $V\in C^{\omega}(\mathbb{R}^n;\mathbb{R}_{\geq 0})$ (real-analyticity), not to contradict real-analytic function theory (bump functions cannot be $C^{\omega}$). Also, when $A$ is not convex, $\Pi_A$ is potentially set-valued, obstructing our vector field construction and perhaps $\Pi_A-\mathrm{id}_{\mathbb{R}^n}$ is not the expected ``\textit{canonical}'' inward vector field (due to the non-scaled offset $-x$). At last, we point out that although $V$ is smooth, this does not imply that $\partial A$ must be a smooth manifold. For instance, consider $V(x_1,x_2)=(x_1-x_2)^2(x_1+x_2)^2$ (although here, $V^{-1}(0)$ is clearly not convex). We direct the reader to~\cite{ref:fathi2019smoothing} and references therein for more on Lyapunov theory with respect to sets. 

\subsection{Geodesic convexity} 
To go beyond vanilla convexity, we follow~\cite[Ch.~3]{ref:udriste2013convex},~\cite[Ch.~11]{ref:boumal2020introduction} and show how the situation is hardly different in the context of \textit{geodesic} convexity. We will be brief, for the details on geodesic convexity we point the reader to the references above and for background information on Riemannian geometry we suggest~\cite{Lee3}. 

Let $(\mathbb{R}^n,g)$ be a $C^{\infty}$ \textit{Riemannian manifold} for some Riemannian metric $g$. One can think of $g$ as inducing a change of coordinates via the inner product $\langle \cdot,\cdot \rangle_g$, in particular, this metric has an effect on gradients,~that is, the (Riemanian) gradient of a differentiable function $f:\mathbb{R}^n\to \mathbb{R}$, with respect to $g$, satisfies $Df(x)[v]=\langle \mathrm{grad}\,f(x), v\rangle_g$ for any $(x,v)\in T\mathbb{R}^n$, with $Df(x)[v]$ being the directional derivative in the direction $v\in T_x\mathbb{R}^n$. For example, let $g$ be parametrized by a symmetric positive definite matrix $P$, that is, $\langle v,w \rangle_g:=\langle Pv,w\rangle$ for any $v,w\in T_x\mathbb{R}^n$ and $x\in \mathbb{R}^n$, then, $\mathrm{grad}\,f(x) = P^{-1}\nabla f(x)$. Indeed, for a practical application of this in $\mathbb{R}^n$, we point the reader to a discussion of Newton's method as used in second-order optimization~\cite[Sec.~9.5]{ref:Boyd_04}. The metric $g$ also has ramifications for ``\textit{straight lines}'', a $C^1$ curve $[0,1]\ni s\mapsto \gamma(s)$ is a \textit{geodesic}, with respect to $g$, when it is an \textit{extremal} of the energy functional $E(\gamma):=\tfrac{1}{2}\int_{[0,1]} \langle \dot{\gamma}(\tau),\dot{\gamma}(\tau)\rangle_g \mathrm{d}\tau$. This implies geodesics are \textit{locally} minimizing length and in that sense they generalize straight lines. As this statement is local, geodesics are by no means always unique. Then, a subset $U\subseteq \mathbb{R}^n$ is called \textit{geodesically convex} ($g$-convex) when for all points $x,y\in U$ there is a \textit{unique}\footnote{See the discussion in~\cite[Sec.~11.3]{ref:boumal2020introduction} on various slightly different definitions of \textit{geodesic convexity} and their implications.} geodesic $\gamma:[0,1]\to \mathbb{R}^n$ (with respect to $g$) connecting $x$ to $y$ such that $\gamma([0,1])\subseteq U$. A function $f:U\subseteq \mathbb{R}^n\to \mathbb{R}$, over some $g$-convex domain $U$, is said to be \textbf{\textit{geodesically convex}} (\textbf{\textit{$g$-convex}}) when
\begin{equation}
    \label{equ:g:cvx:geod}
     (1-t)f(x)+tf(y)\geq f(\gamma(t))\quad \forall t\in [0,1]
\end{equation}
for $\gamma:[0,1]\to \mathbb{R}^n$ a geodesic, with $\gamma([0,1])\subseteq U$ connecting the point $x$ to $y$. Indeed,~\eqref{equ:g:cvx:geod} generalizes the standard $C^0$ definition of convexity. A $C^1$ condition is now given by
\begin{equation*}
    f(\mathrm{Exp}_x(tv))\geq f(x) + t\langle \mathrm{grad}\,f(x),v\rangle_g\quad \forall t\in [0,1],
\end{equation*}
where $v\in T_x\mathbb{R}^n$, $\mathrm{Exp}_x$ is the (Riemannian) \textit{exponential map} at $x\in U\subseteq \mathbb{R}^n$ and $\mathrm{grad}\,f(x)$ is the \textit{Riemannian gradient} of $f$. Here, the exponential map is defined, locally, by $\mathrm{Exp}_x(v)=\gamma(1)$ for $\gamma$ the unique geodesic with $\gamma(0)=x$ and $\dot{\gamma}(0)=v$.

Similarly, for a $C^2$ condition, a function $f$ is $g$-convex when the \textit{Riemannian Hessian} satisfies $\mathrm{Hess}\,f(x)\succeq 0$ for all $x\in U\subseteq  \mathbb{R}^n$. The interest in $g$-convex functions stems from the fact that local minima are again global minima, as with standard convex functions. 

We are now equipped to generalize Theorem~\ref{thm:cvx:Lyap}.

\begin{theorem}[Geodesically convex Lyapunov functions]
\label{thm:geod:cvx:Lyap}
Let $(\mathbb{R}^n,g)$ be a \textit{Riemannian manifold} and let $U\subseteq \mathbb{R}^n$ be open and $g$-convex. Let $F\in C^{0}(U;\mathbb{R}^n)$ give rise to $\dot{x}=F(x)$ with $0\in U$ globally asymptotically stable (GAS) (on $U$) under $F$. Then, if there is a g-convex $C^{\infty}$ Lyapunov function asserting $0$ is GAS, the vector field $F$ is straight-line homotopic to $\mathrm{Exp}^{-1}(0)$ such that $0$ is GAS throughout the homotopy.
\end{theorem}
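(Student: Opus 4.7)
The plan is to imitate the proof of Theorem~\ref{thm:cvx:Lyap} verbatim, swapping the Euclidean first-order convexity inequality for its Riemannian counterpart and the ray $t\mapsto (1-t)x$ for the geodesic from $x$ to $0$. Concretely, I first invoke the $C^1$ characterization of $g$-convexity stated just before the theorem: for every $x\in U$ and every $v\in T_x\mathbb{R}^n$ with $\mathrm{Exp}_x(tv)\in U$ for $t\in[0,1]$,
\begin{equation*}
V(\mathrm{Exp}_x(v))\geq V(x)+\langle \mathrm{grad}\,V(x),v\rangle_g.
\end{equation*}
Since $U$ is $g$-convex and contains $0$, for each $x\in U$ there is a unique geodesic $\gamma_x:[0,1]\to U$ from $x$ to $0$, with initial velocity $v_x:=\dot\gamma_x(0)=\mathrm{Exp}_x^{-1}(0)\in T_x\mathbb{R}^n$. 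Specialising the inequality to $v=v_x$ and using $V(0)=0$ together with Property~(V-\ref{prop:i:V}) yields
\begin{equation*}
\langle \mathrm{grad}\,V(x),\mathrm{Exp}_x^{-1}(0)\rangle_g\leq -V(x)<0\qquad\forall x\in U\setminus\{0\}.
\end{equation*}
This is the exact analogue of the inequality $\langle\nabla V(x),-x\rangle\leq -V(x)$ from the proof of Theorem~\ref{thm:cvx:Lyap}.

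Next I would translate this into the Lyapunov derivative. Because the differential $dV_x$ is metric-free and satisfies $dV_x(w)=\langle\mathrm{grad}\,V(x),w\rangle_g$ for every $w\in T_x\mathbb{R}^n$, the hypothesis $dV_x(F(x))<0$ on $U\setminus\{0\}$ and the inequality just obtained combine linearly: for every $s\in[0,1]$ and $x\in U\setminus\{0\}$,
\begin{equation*}
dV_x\bigl(sF(x)+(1-s)\mathrm{Exp}_x^{-1}(0)\bigr)=s\,dV_x(F(x))+(1-s)\langle\mathrm{grad}\,V(x),\mathrm{Exp}_x^{-1}(0)\rangle_g<0.
\end{equation*}
Thus $V$ is simultaneously a Lyapunov function for every vector field along the straight-line homotopy $H(s,x):=sF(x)+(1-s)\mathrm{Exp}_x^{-1}(0)$, which gives GAS of $0$ throughout, exactly as desired. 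The homotopy between two systems admitting possibly different $g$-convex Lyapunov functions is then obtained by concatenation, in direct analogy with the piecewise map $H$ written after Theorem~\ref{thm:cvx:Lyap}.

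The main obstacle, and what needs most care, is checking that the target vector field $x\mapsto \mathrm{Exp}_x^{-1}(0)$ is both well-defined and continuous on $U$, so that $H$ is a bona fide continuous homotopy in $C^0(U;\mathbb{R}^n)$. Well-definedness follows from $g$-convexity of $U$, which was chosen precisely to guarantee a \emph{unique} geodesic between any two points of $U$; continuity follows from smoothness of the exponential map together with the inverse function theorem applied fiberwise, since uniqueness forces $\mathrm{Exp}_x$ to be a local diffeomorphism onto its image. Beyond this regularity check, one should also note that the restriction to open $g$-convex $U$ is essential because the Euclidean shortcut $\{(1-t)x:t\in[0,1]\}\subseteq\mathbb{R}^n$ need no longer lie in the domain on which $V$ is defined; the geodesic $\gamma_x$ replaces it. No new topological input is needed, so the theorem is genuinely a direct Riemannian lift of Theorem~\ref{thm:cvx:Lyap}.
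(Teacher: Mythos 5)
Your proposal is essentially identical to the paper's proof: both invoke the $C^1$ characterization of $g$-convexity at $v=\mathrm{Exp}_x^{-1}(0)$ (and $t=1$), identify $\langle\mathrm{grad}\,V(x),\cdot\rangle_g$ with the metric-free differential $dV_x$ so that the Euclidean Lyapunov hypothesis $\langle\nabla V(x),F(x)\rangle<0$ can be combined linearly with the new inequality, and then conclude via the straight-line homotopy exactly as in Theorem~\ref{thm:cvx:Lyap}. Your added remarks on well-definedness and continuity of $x\mapsto\mathrm{Exp}_x^{-1}(0)$ are a slightly more verbose version of the paper's observation that uniqueness plus completeness make $\mathrm{Exp}_x$ a global diffeomorphism; no substantive difference.
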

In Theorem~\ref{thm:geod:cvx:Lyap}, $\mathrm{Exp}^{-1}(0)$ should be understood as the map being defined by $x\mapsto \mathrm{Exp}_x^{-1}(0)\in T_x\mathbb{R}^n$. 
\begin{proof}
By assumption, there is a $C^{\infty}$ Lyapunov function $V$ such that $\langle \nabla V(x),F(x) \rangle <0$ for all $x\in U\setminus\{0\}$. By the $g$-convexity of $V$ we also know that for all $t\in[0,1]$ and $(x,v)\in TU$ we have
\begin{align*}
    V(\mathrm{Exp}_x(tv))\geq& V(x)+t\langle \mathrm{grad}\,V(x),v\rangle_g\\
    =& V(x) + t\langle \nabla V(x),v\rangle,
\end{align*}
where we removed the dependency on the metric $g$ by identifying both inner products with the directional derivative $DV(x)[v]$. We consider $t=1$ and pick $v:=\mathrm{Exp}_x^{-1}(0)$. This map is always well-defined since our geodesics are unique. Now we proceed exactly as in the proof of Theorem~\ref{thm:cvx:Lyap} and conclude. 
\end{proof}

Indeed, we recover Theorem~\ref{thm:cvx:Lyap} for the identity metric on $\mathbb{R}^n$ and $U=\mathbb{R}^n$. In particular, in that case we can define our Riemannian exponential map as $\mathrm{Exp}_x(v)=x+v$ for (sufficiently small) $v\in T_xU$. Hence, the tangent vector $v$ such that $\mathrm{Exp}_x(v)=0$ is simply $-x$ (now seen as a tangent vector),~\textit{i.e.}, $\mathrm{Exp}_x^{-1}(0)=-x$. Recall, formally speaking, $-\mathrm{id}_{\mathbb{R}^n}$ should be understood as $x\mapsto (x,-x)\in TU$ while ignoring the first component of the image. With this in mind we can again understand $\mathrm{Exp}^{-1}(0)$ as the canonical ``\textit{inward}'' vector field, yet now on a subset of $(\mathbb{R}^n,g)$.  

Generalizing to compact manifolds and so forth (beyond contractible sets) is somewhat nonsensical as no smooth function function with a single critical point exists on those spaces. This restriction comes from the demand that our geodesics are unique, obstructing nontrivial topologies. See~\cite[Ch.~4]{ref:udriste2013convex} for more pointers. 

A similar generalization can be achieved through the lens of \textit{contraction analysis}~\cite{ref:lohmiller1998contraction}. See in particular~\cite{ref:wensing2020beyond} for a relation between $g$-convexity and contraction metrics. 

We end this section by returning to Remark~\ref{rem:suff}, the Lyapunov function with respect to~\eqref{equ:ahmadi} is nonconvex, yet the dynamical system satisfies the necessary condition~\eqref{equ:cvx:L:nec}. {Indeed, the function \textit{is} locally $g$-convex\footnote{In fact, the function $x\mapsto \log(1+x^2)$ is also semiconcave.} (under quadrant-wise exponential geodesics, using a ``\textit{log-barrier}'' metric \textit{cf.}~\cite[Ex. 4.8]{ref:vishnoi2018geodesic}, thanks to the invariance properties of the vector field). However, under such a choice of metric the exponential map is not well-defined (the metric is singular at $0$), generalizing our framework to handle weaker regularity conditions is left for future work.} 

\section{On continuation}
\label{sec:continuation}
The existence of a mere homotopy is not immediately informative. Often, only when the homotopy itself satisfies certain properties, one can draw nontrivial conclusions. 

In our case the homotopies as detailed in Theorem~\ref{thm:cvx:Lyap}, Corollary~\ref{cor:cvx:control:Lyap}, Corollary~\ref{cor:cvx:Lyap:A} and Theorem~\ref{thm:geod:cvx:Lyap} all preserve qualitative properties of the underlying dynamical system. More formally, this construction provides a \textit{continuation} in the sense of Conley, albeit from a different perspective. Again, we are decidedly brief, but we point the reader to~\cite{ref:conley1978isolated,ref:mischaikow2002conley} for more details on Conley index theory and suggest~\cite{Hatcher} as a reference on algebraic topology.

Recall that a dynamical system of the form~\eqref{equ:dyn:sys} gives rise to a global flow $\varphi:\mathbb{R}\times \mathbb{R}^n\to \mathbb{R}^n$. Let $S\subset \mathbb{R}^n$ be an \textit{isolated invariant set} (with respect to $\varphi)$, that is,
\begin{equation*}
    S = \mathrm{Inv}(M,\varphi):=\{x\in M:\varphi(\mathbb{R},x)\subseteq M\}\subseteq \mathrm{int}(M)
\end{equation*}
for some compact set $M\subset \mathbb{R}^n$. Note that not every invariant set is isolated,~\textit{e.g.} consider an equilibrium point of the \textit{center}-type. Then, a pair of compact sets $(N,L)\subset \mathbb{R}^n\times \mathbb{R}^n$ is an \textit{index pair} for $S$ when
\begin{enumerate}[({I}-i)]
    \item $S=\mathrm{Inv}(\mathrm{cl}(N\setminus L),\varphi)$ and $N\setminus L$ is a neighbourhood of $S$;
    \item $L$ is positively invariant in $N$;
    \item $L$ is an exit set for $N$ (a trajectory that leaves $N$, must leave through $L$).
\end{enumerate}
Now, the (homotopy) \textit{Conley index} of $S$ is the homotopy type of the pointed (quotient) space $(N/L,[L])$,~\textit{e.g.}, for $N=\mathbb{B}^n$, $L=\partial \mathbb{B}^n=\mathbb{S}^{n-1}$, we have that $N/L\simeq \mathbb{S}^n$ such that $(N/L,[L])$ is the pointed $n$-sphere. As this object is hard to computationally work with, let $H^k(A,B;\mathbb{Z})$ denote the $k^{\mathrm{th}}$ singular cohomology group of $A$ relative to $B\subseteq A$, then, the \textit{homological} \textbf{\textit{Conley index}} defined as $\mathrm{CH}^k(S,\varphi):=H^k(N/L,[L];\mathbb{Z})$ is of larger interest,~\textit{e.g.}, as computational tools \textit{are} available~\cite{ref:kaczynski2004computational}. Going back to our setting, assume for that moment that $0\in \mathbb{R}^n$ is a GAS hyperbolic fixed point of the flow $\varphi$. By hyperbolicity (local linearity), we can pick $N=\varepsilon\mathbb{B}^n$ (a sufficiently small closed ball in $\mathbb{R}^n$) and $L=\emptyset$. Now see that
\begin{equation*}
\begin{aligned}
\mathrm{CH}^k(0,\varphi)&=H^k(\varepsilon\mathbb{B}^n/\emptyset,[\emptyset];\mathbb{Z})\\
        &\simeq H^k(\varepsilon\mathbb{B}^n;\mathbb{Z})\\
        &\simeq \begin{cases}
        \mathbb{Z}\quad & \text{if }k=0\\
        0\quad & \text{otherwise}
    \end{cases}
\end{aligned}
\end{equation*}
since $\varepsilon\mathbb{B}^n$ is homotopic to a point. If $0$ is not hyperbolic, pick $N$ to be a sub-level set of a smooth Lyapunov function that asserts $0$ is GAS, this set is compact by Property~(V-\ref{prop:iii:V}). Indeed, constructions like these provide for topological obstructions~\cite{ref:moulay2011conley}. 

Now, if some $N$ can be chosen to be an isolating neighbourhood \textit{throughout} a homotopy, then the Conley index is preserved along that homotopy~\cite[Thm.~1.10]{ref:mrozek1994shape}. Simply put, we speak in this case of a \textit{\textbf{continuation}} between the dynamical system at the beginning and the end of the homotopy. A question asked by Conley concerns the opposite~\cite[p.~83]{ref:conley1978isolated}, to what extent do equivalent Conley indices relate to the existence of such a continuation. See also the discussion in~\cite{ref:mrozek2000conley,ref:kvalheim2022obstructions}. Indeed, we see that if there is a homotopy through flows $[0,1]\ni\lambda\mapsto \varphi_{\lambda}$ such that $0$ is GAS along the homotopy, then $\mathrm{CH}^k(0,\varphi_0)\simeq \mathrm{CH}^k(0,\varphi_1)$.

For the other direction, based on the above we have the following. One can extend the statement to compact convex sets or $g$-convexity if desired. 
\begin{corollary}[On continuation and convex Lyapunov functions]
  Let $0\in \mathbb{R}^n$ be GAS under two dynamical systems of the form~\eqref{equ:dyn:sys} parametrized by $F_0$ and $F_1$, giving rise to the flows $\varphi_0$ and $\varphi_1$. Assume that $0$ being GAS is asserted by---possibly different---smooth, convex Lyapunov functions $V_0$ and $V_1$. Then $0$ (with respect to $\varphi_0$) and $0$ (with respect to $\varphi_1$) are related by continuation where $N$ can be chosen to be of the form $N=N_0\cap N_1$ (based on sublevel sets of $V_0$ and $V_1$).  
\end{corollary}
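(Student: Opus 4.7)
My plan is to apply Theorem~\ref{thm:cvx:Lyap} twice in order to stitch $F_0$ and $F_1$ into a single homotopy through GAS-preserving vector fields, and then to show that one compact set assembled from sublevel sets of $V_0$ and $V_1$ serves as an isolating neighborhood of $\{0\}$ uniformly in the homotopy parameter. Concretely, Theorem~\ref{thm:cvx:Lyap} applied to the pair $(F_0,V_0)$ produces a straight-line homotopy from $F_0$ to $-\mathrm{id}_{\mathbb{R}^n}$ along which $V_0$ remains a Lyapunov function, and applied to $(F_1,V_1)$ it produces a homotopy from $-\mathrm{id}_{\mathbb{R}^n}$ to $F_1$ along which $V_1$ does. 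Reparametrising these to $s\in[0,\tfrac{1}{2}]$ and $s\in[\tfrac{1}{2},1]$ respectively yields exactly the continuous homotopy $H$ displayed just after Theorem~\ref{thm:cvx:Lyap}; let $\varphi_s$ denote the flow of $H(s,\cdot)$.

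Next I fix $c_0,c_1>0$, set $N_i:=V_i^{-1}([0,c_i])$, which is compact by Property~(V-\ref{prop:iii:V}) and contains $0$ in its interior, and take $N:=N_0\cup N_1$. To invoke Mrozek's continuation theorem~\cite[Thm.~1.10]{ref:mrozek1994shape} recalled just before the corollary, I need $\mathrm{Inv}(N,\varphi_s)=\{0\}\subset\mathrm{int}(N)$ for every $s\in[0,1]$. Interior containment is immediate, so the substantive task is the invariant-set computation. For $s\in[0,\tfrac{1}{2}]$, take $x\in N\setminus\{0\}$; by the convexity step inside the proof of Theorem~\ref{thm:cvx:Lyap}, $V_0$ is a strict Lyapunov function for $\varphi_s$, so $t\mapsto V_0(\varphi_s(-t,x))$ is strictly increasing on $[0,\infty)$ and in particular bounded below by $V_0(x)>0$. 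If the backward orbit of $x$ stayed in the compact set $N$, it would be bounded, the $\alpha$-limit set $\alpha(x)$ would be nonempty, compact, invariant, and GAS applied to any $y\in\alpha(x)$ would force $0\in\alpha(x)$; but then some sequence $t_n\to\infty$ would satisfy $\varphi_s(-t_n,x)\to 0$, giving $V_0(\varphi_s(-t_n,x))\to 0$ and contradicting the lower bound. Hence the backward orbit leaves $N$ and $x\notin\mathrm{Inv}(N,\varphi_s)$. The case $s\in[\tfrac{1}{2},1]$ is identical with $V_1$ in place of $V_0$.

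With $N$ thus established as a common isolating neighborhood of $\{0\}$ throughout the homotopy, Mrozek's continuation invariance yields $\mathrm{CH}^k(\{0\},\varphi_0)\simeq\mathrm{CH}^k(\{0\},\varphi_1)$ for all $k$, which is exactly the continuation statement the corollary asks for. The step I expect to be the main obstacle is precisely this uniform isolation: on $[0,\tfrac{1}{2}]$ only $V_0$ is guaranteed to be a Lyapunov function for $\varphi_s$ and on $[\tfrac{1}{2},1]$ only $V_1$, so neither $N_0$ nor $N_1$ is evidently an isolating neighborhood of $\{0\}$ throughout the whole homotopy. The remedy is to take their union and to use, at each $s$, only the Lyapunov function that is active there, converting backward-in-time divergence of the relevant $V_i$ against compactness of $N$ into orbit exit.
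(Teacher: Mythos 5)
Your proof is correct and follows exactly the construction the paper sketches but does not carry out: the paper states the corollary as following ``from the above'' (Theorem~\ref{thm:cvx:Lyap}, the displayed homotopy $H$, and Mrozek's continuation theorem) and defers further details to future work. You correctly identify that the substantive step is verifying that $N=N_0\cup N_1$ isolates $\{0\}$ for \emph{every} $s$, and your resolution---using $V_0$ for $s\in[0,\tfrac12]$ and $V_1$ for $s\in[\tfrac12,1]$, and turning strict backward-time growth of the active $V_i$ plus compactness of $N$ into an $\alpha$-limit-set contradiction---is the right argument; note that backward-orbit uniqueness away from the origin is not needed, since the flow group property alone forces $\varphi_s(-t,x)\neq 0$ for $x\neq 0$, so $V_i$ is genuinely strictly increasing along the backward orbit.
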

 A further study of this observation is the topic of future work. 

To return to similarities pointed out in the introduction, the work by Reineck~\cite{ref:reineck1991continuation} and the proof of \cite[Thm.~11.4]{ref:coron2007control} provide the homotopy (preserving the Conley index) between $F$ and the (a) negative gradient flow $-\nabla V$. However, how to link---if at all--- multiple dynamical systems is unclear. The book by Cieliebak \& Eliashberg does contain results in this direction, yet under $C^k$-nearness assumptions~\cite[Ch.~9]{ref:cieliebak2012stein}, not in general. 

Then, this work alludes to convexity being a simple structural ingredient to actually link several dynamical systems together via some canonical dynamical system. 

\section{Conclusion and future work}
\label{sec:conclusion}
{
We showed that the space of dynamical systems over $\mathbb{R}^n$ with $0$ being GAS subject to the existence of a (generalized) convex Lyapunov function is path-connected, see Section~\ref{sec:homotopy:path}. As a byproduct we derived necessary conditions for smooth, convex (control) Lyapunov functions to exist.}

There is recent work regarding paths in the space of stable dynamical systems in the context of linear optimal control~\cite{ref:bu2019lqr,ref:ECCJongeneelKuhn21}, but further extensions are largely lacking. We hope this article inspires more work. 

This work focused on the complete $C^{0}$ setting with the emphasis on $\mathbb{R}^n$, future work aims at studying dynamical control systems under weaker regularity assumptions in more general spaces with the focus on more general attractors. 

Also, this work focused on the exploitation of a $g$-convex structure, however, more general structures have been proposed and studied,~\textit{e.g.}, a compositional structure~\cite{ref:Grune2021}. It seems worthwhile to study more structural assumptions along the lines of this article and previous work by Aeyels \& Sepulchre~\cite{ref:sepulchre1996homogeneous}. {For instance, one could consider \textit{weak convexity},~\textit{e.g.}, see~\cite{davis2019stochastic}, and similarly, one might consider other stability notions that provide for more structure, like \textit{exponential} stability \textit{cf.}~\cite{vidyasagar2022new}. 

Another direction of future work is to elaborate on the work by Gr\"une, Sontag \& Wirth~\cite{grune1999asymptotic}. 
In the remaining subsections we identify more concrete directions of future work. 
}

{
\subsection{Invexity}
Let $W\subseteq \mathbb{R}^n$ be open. A function $f\in C^1(W;\mathbb{R})$ is said to be {\textit{invex}} when there is a map $\eta:W\times W\to \mathbb{R}^n$ such that $f(y)\geq f(x)+\langle \nabla f(x),\eta(x,y)\rangle$ for any $x,y\in W$. Differently put, invex functions are such that critical points, \textit{i.e.}, $x^{\circ}\in W$ such that $\nabla f(x^{\circ})=0$, are \textit{global} minima. The map $\eta$ is sometimes referred to as the \textit{kernel} and $f$ is said to be invex with respect to this kernel $\eta$. Let $x^{\circ}$ be a critical point of some invex function $f\in C^1(W;\mathbb{R})$, then $f(x)\geq f(x^{\circ})$ for all $x\in X$. Conversely, let $f\in C^1(W;\mathbb{R})$ be such that every critical point is a global minimizer, then we can define $\eta:W\times W\to \mathbb{R}^n$ by
\begin{equation*}
\label{equ:eta:1}
    \eta(x,y) = \begin{cases}
0 \quad &\text{if } \nabla f(x)=0\\
\dfrac{(f(y)-f(x))\nabla f(x)}{\langle \nabla f(x),\nabla f(x) \rangle}\quad &\text{otherwise}.
    \end{cases}
\end{equation*}
This construction shows that indeed $f\in C^1(W;\mathbb{R})$ is invex if and only if every critical point is a global minimizer. So far, we have not said anything about the space of kernels, and in that sense, $\eta$ is unconstrained and not equipped with any structure. Indeed, invexity has been the subject of controversy~\cite{ref:zualinescu2014critical,ref:borwein2017generalisations}, mainly due to vacuous generalizations. Nevertheless, continuity of $\eta$ has been studied~\cite{ref:smart1996continuity} and a further study might allow for generalizing several arguments from above. A similar viewpoint can be found in~\cite{barik2023invex}, where the authors identify mild assumptions on $\eta$ such that first-order invex optimization algorithms provably converge. 
}

{
\subsection{Convex envelopes}
Suppose $0$ is GAS under a vector field $F$ on $\mathbb{R}^n$, hence, there is a $C^{\infty}$ Lyapunov function $V$ and we know that there is a homotopy between $F$ and $-\nabla V(x)$ such that along the homotopy $0$ remains GAS. Now, construct the \textit{convex envelope} of $V$ as $\mathrm{conv}({V})(x):=\sup\{g(x):g\text{ is convex and }g\leq V \text{ on }\mathbb{R}^n\}$. It can be shown that $\mathrm{conv}(V)$ is $C^1$ by our assumptions on $V$~\cite{ref:kirchheim2001differentiability}. It readily follows that, {with respect to $\dot{z}=-\nabla \mathrm{conv}(V)(z)$,} $\mathrm{conv}(V)$ satisfies Properties~(V-\ref{prop:i:V})-(V-\ref{prop:iii:V}), as such, $0$ is GAS under $\dot{z}=-\nabla \mathrm{conv}(V)(z)$. Therefore, a homotopy between $V$ and $\mathrm{conv}(V)$ that preserves regularity and invexity would allow for solving the main research question of this article for equilibrium points of vector fields on $\mathbb{R}^n$. A similar question has been studied in the context of Hamilton-Jacobi equations. Omitting details, Vese showed in 1999 that the PDE
\begin{equation}
\label{equ:vese}
    \frac{\partial u}{\partial t} = \sqrt{1+\|\nabla_x u\|_2^2 }\min \{0, \lambda_{\mathrm{min}}(\nabla_x^2 u)\}
\end{equation}
converges to precisely the convex envelope of $u(0,x)$~\cite{vese1999method}. This observation has been used in the context of homotopy methods for nonconvex optimization~\cite{mobahi2015link}, see also~\cite{simoes2021lasry,heaton2023global}. It is, however, not clear if regularity and invexity, perhaps after adapting~\eqref{equ:vese}, are indeed preserved along a solution $u(t,x)$. We believe this is interesting future work.  
}

\section{Appendix}
In this appendix we present auxiliary results on topology and switched systems. 
\label{sec:app}
{
\subsection{Homotopies and path-connectedness}
\label{sec:homotopy:path}
We frequently refer to spaces of stable dynamical systems and ask if such a space is path-connected or not, however, without making precise how to think of continuous curves in such a space. In this appendix, we briefly highlight how to go about this. We recall that we identify continuous vector fields on $\mathbb{R}^n$ with elements of $C^0(\mathbb{R}^n;\mathbb{R}^n)$. We will address in which sense the homotopy $H:[0,1]\times \mathbb{R}^n\to \mathbb{R}^n$ provides a continuous path in the space $C^0(\mathbb{R}^n;\mathbb{R}^n)$.
Due to space constraints, the discussion is brief, but for more details, we refer the reader to~\cite{ref:hirsch1976differential,munkres2014pearson}.

First, a topological space $X$ is said to be \textit{path-connected} if for any two points $x_0,x_1\in X$ there exists a continuous map $\gamma:[0,1]\to X$, a path, such that $\gamma(0)=x_0$ and $\gamma(1)=x_1$. Then, to reason about continuous curves in $C^0(\mathbb{R}^n;\mathbb{R}^n)$ we need to endow it with a topology, that is, we need to decide when two continuous maps are ``\textit{close}''. Leaving $\mathbb{R}^n$ for the moment, given two topological spaces $X$ and $Y$, then, for $K$ a compact subset of $X$ and $U$ an open subset of $Y$, sets of the form $\mathcal{O}(K,U):=\{f:f\in C^0(X;Y),\, f(K)\subset U\}$ comprise a \textit{subbasis} for the \textit{compact-open topology} on $C^0(X;Y)$. It turns out that this is the appropriate topology, as one can show the following. Let $X,Y$ and $Z$ be topological spaces with $X$ locally compact Hausdorff and endow $C^0(X;Y)$ with the compact-open topology, then, the map $H:Z\times X\to Y$ is continuous if and only if the map $h:Z\to C^0(X;Y)$ is continuous, where $h$ is defined by $(h(z))(x)=H(z,x)$~\cite[Thm.~46.11]{munkres2014pearson}. In particular, pick $Z=[0,1]$ and $X=Y=\mathbb{R}^n$, then, the existence of a homotopy $H:[0,1]\times \mathbb{R}^n\to \mathbb{R}^n$ is equivalent to a continuous path in $C^0(\mathbb{R}^n;\mathbb{R}^n)$.
}

{
\subsection{Switched systems}
Suppose we have a finite set of locally Lipschitz vector fields $\mathcal{F}=\{F_1,\dots,F_n\}$ on $\mathbb{R}^n$ such that the origin $0\in \mathbb{R}^n$ is GAS under any $F_i\in \mathcal{F}$. Now one might be interested in understanding if $0$ is still GAS under arbitrary switching between elements of $\mathcal{F}$, that is, to understand if $0$ is GAS under the \textit{switched system}
\begin{equation}
\label{equ:switched}
    \frac{\mathrm{d}}{\mathrm{d}t}x(t) = F_{\sigma(t)}(x(t)),
\end{equation}
where $t\mapsto \sigma(t)$ is a piecewise constant function taking values in $\{1,\dots,n\}$. It is known that for this to be true a common $C^{\infty}$ Lyapunov function $V$ must exist~\cite{ref:mancilla2000converse}. However, by the proceeding arguments we know that this implies that any $F_i\in \mathcal{F}$ can be homotoped to $-\nabla V$ such that $0$ remains GAS along the homotopy. Then, by the transitive properties of homotopies, this implies that for any two elements of $\mathcal{F}$ there must be a homotopy between them such that along the homotopy $0$ remains GAS. Hence, a somewhat counterintuitive statement is the following.

\begin{proposition}[Necessary condition for switched stability]
\label{prop:nec:switched}
    The origin $0\in \mathbb{R}^n$ is GAS under~\eqref{equ:switched} only if all elements of $\mathcal{F}$ belong to the same path-connected component of the space of continuous vector fields on $\mathbb{R}^n$ for which $0$ is GAS. 
\end{proposition}

Hence, Proposition~\ref{prop:nec:switched} further motivates studying the topology of the space of vector fields with a common attractor,~\textit{e.g.}, vector fields on $\mathbb{R}^n$ such that $0$ is GAS. {It is interesting to note that under the aforementioned conditions, the switched system~\eqref{equ:switched} can be continuously transformed to a negative gradient flow, without sacrificing stability. To that end, simply construct the maps $H_{\sigma}:[0,1]\times \mathbb{R}^n\to \mathbb{R}^n$ defined by $H_{\sigma}(s,x)=(1-s)F_{\sigma}(x)-s\nabla V(x)$ and observe that for any $s\in [0,1]$ the origin is GAS under
\begin{equation*}
    \frac{\mathrm{d}}{\mathrm{d}t}x(t) = H_{\sigma(t)}(s,x(t)). 
\end{equation*}
}

For more on switched systems we refer the reader to~\cite{ref:liberzon2003switching}. In particular, we point the reader to~\cite[Rem.~2.1]{ref:liberzon2003switching} for subtleties with respect to Lyapunov functions for switched systems. {In fact, from the same point of view, one observes that a necessary condition for $0$ to be GAS under~\eqref{equ:switched} is that $0$ is GAS under any element of the convex hull of $\mathcal{F}$,~\textit{e.g.}, consider $\langle \theta F_i + (1-\theta)F_j,\nabla V \rangle$ for $\theta\in [0,1]$~\cite[Cor. 2.3]{ref:liberzon2003switching}. }

\printbibliography[title={References}]



\end{document}